\documentclass[11pt,twoside]{article}

\usepackage[english]{babel}

\usepackage{amsmath}
\usepackage{amssymb}

\usepackage{mathrsfs}
\usepackage{amsthm}

\usepackage{indentfirst}
\usepackage{color}
\usepackage{txfonts}

\usepackage{anysize}

\usepackage[colorlinks=true,
linkcolor=blue,
citecolor=red,
urlcolor=magenta,
]{hyperref}

\allowdisplaybreaks

\newtheorem{theorem}{Theorem}[section]
\newtheorem{lemma}[theorem]{Lemma}

\newtheorem{proposition}[theorem]{Proposition}

\theoremstyle{definition}
\newtheorem{remark}[theorem]{Remark}
\newtheorem{definition}[theorem]{Definition}

\numberwithin{equation}{section}

\begin{document}

\title{\bf\Large Capacitary-Distance Hardy Inequality\footnotetext{\hspace{-0.35cm} 2020 {\it
Mathematics Subject Classification}. Primary 46E35; Secondary 26D10, 31B15, 60J65.
\endgraf
{\it Key words and phrases}. Hardy inequality, capacitary distance, Newtonian capacity,
Brownian motion, killed semigroup.
\endgraf
This project is partially supported by the National Natural Science
Foundation of China (Grant Nos. 12431006,
12371093, and 12501118), the Beijing Natural Science Foundation (Grant No. 1262011),
the Fundamental Research Funds for the Central Universities (Grant No. 2253200028),
and NSECR of Canada (Grant No. 202979).
}}
\author{Yiqun Chen, Jie Xiao, Dachun Yang, Wen Yuan and
Yangyang Zhang
}
\maketitle

\vspace{-0.7cm}

\begin{center}
\begin{minipage}{13cm}
{\small {\bf Abstract:}\quad
Let $n\ge3$, $\Omega\subset\mathbb R^n$ be an open set,
$F:=\mathbb R^n\setminus\Omega$, and $\alpha\in(0,\infty)$.
For any $x\in\Omega$, we define the capacitary distance
\begin{align*}
d_\alpha(x)
:=
\inf\left\{
r>0:
\operatorname{cap}(\overline{F\cap B(x,r)})
\ge
\alpha\operatorname{cap}(B(\mathbf0,r))
\right\}.
\end{align*}
In this article, we prove that there exists a positive constant $C_n$, depending only on $n$,
such that, for any $\alpha\in(0,1]$ and any
$u\in C_{\rm{c}}^\infty(\Omega)$,
\begin{align*}
\int_\Omega
\frac{|u(x)|^2}{d_\alpha(x)^2}\,d x
\le
\frac{C_n}{\alpha^{2}}
\int_\Omega|\nabla u(x)|^2\,d x.
\end{align*}
This gives an affirmative answer to Problem~8 of Maz'ya \cite{MazyaProblems}.
Moreover, this dependence on $\alpha$ is sharp: there exists a positive
constant $c_n$, depending only on $n$, such that, for every
$\alpha\in(0,1]$, we are able to construct a bounded connected domain
$\Omega_\alpha$ on which the optimal constant in the above
Hardy inequality is at least $\frac{c_n}{\alpha^{2}}$.
The proof combines a variable-time semigroup estimate for the killed
Brownian motion with finite-time exit estimates derived from capacity.
}
\end{minipage}
\end{center}

%\vspace{0.15cm}

%\tableofcontents

\vspace{0.2cm}

\section{Introduction}

Hardy-type inequalities originate in the classical one-dimensional
inequalities of Hardy \cite{Hardy1920,Hardy1925} and have become
important tools in harmonic analysis and partial differential equations.
Among their applications are Sobolev
embedding and Gagliardo--Nirenberg interpolation inequalities
\cite{Chua2005,KalamajskaPietruskaPaluba2011}, the study of partial
differential equations with singular potentials
\cite{BarasGoldstein1984,VazquezZuazua2000}, and questions concerning
operator semigroups and potential theory
\cite{ArendtGoldsteinGoldstein2006,Fitzsimmons2000};
for more investigations concerning Hardy-type inequalities, we refer to
\cite{AghajaniKinnunenRadulescu2025,DominguezTikhonov2019,
GunawanHakimNakaiSawano2018,KinnunenMartio1997}.

One of the standard forms of Hardy inequalities is the distance
$p$-Hardy inequality: for suitable domain $\Omega$ and for
any $u\in C_{\rm{c}}^\infty(\Omega)$
and $p\in(1,\infty)$,
\begin{align}
\label{eq:intro-classical-p-hardy}
\int_\Omega
\left[\frac{|u(x)|}{\operatorname{dist}(x,\partial\Omega)}\right]^p\,d x
\lesssim \int_\Omega|\nabla u(x)|^p\,d x,
\end{align}
where the implicit positive constant is independent of $u$.
In $\mathbb R^n$, $n\ge2$, this kind of Hardy-type inequalities on domains first
appeared in the work of Ne\v{c}as \cite{Necas1962} in the context of
Lipschitz domains.  The works of Ancona \cite{AnconaHardy} for $p=2$,
Lewis \cite{LewisUniformlyFat}, and Wannebo \cite{Wannebo1990} then showed
that the regularity of the boundary is not necessary for
Hardy inequalities.  In particular, the uniform $p$-thickness of the
complement is sufficient for the $p$-Hardy inequality.

A fundamental way to describe the validity of
\eqref{eq:intro-classical-p-hardy} is using capacity.  
From the classical electrostatic viewpoint, Newtonian capacity measures how much electric charge a conductor can hold at unit potential.
In the case
$p=2$, which is the case considered in this article, Maz'ya's
capacitary criterion shows that
\begin{align}\label{e1.2}
\int_\Omega\frac{|u(x)|^2}{\operatorname{dist}(x,\partial\Omega)^2}\,d x
\lesssim
\int_\Omega|\nabla u(x)|^2\,d x
\end{align}
for $u\in C_{\rm{c}}^\infty(\Omega)$
holds with the implicit positive constant independent of $u$ if and only if
\begin{align*}
\int_K\frac{\,d x}{\operatorname{dist}(x,\partial\Omega)^2}
\lesssim
\operatorname{cap}_\Omega(K)
\end{align*}
for every compact $K\Subset\Omega$ holds with the implicit positive
constant independent of $K$;
see \cite[pp.~733--735]{MazyaSobolev} and also
\cite[Theorem~2.1]{KinnunenKorteHardy}.
This shows that the validity of the classical distance Hardy
inequality is sensitive to the geometry of the domain.

Let
$n\in[3,\infty)\cap\mathbb N$, $\Omega\subset\mathbb R^n$ be a domain, and
$F:=\mathbb R^n\setminus\Omega$.  For any $\alpha\in(0,\infty)$, we
define
\begin{align*}
d_\alpha(x)
:=
\inf\left\{
r>0:
\operatorname{cap}\left(\overline{F\cap B(x,r)}\right)
\ge
\alpha\operatorname{cap}\left(B(\mathbf0,r)\right)
\right\}.
\end{align*}
Observe that $d_\alpha$ is sensitive not merely to the
boundary but also to its capacitary size.
Based on the aforementioned background, Maz'ya
\cite[Problem~8, Section~3.8]{MazyaProblems}
asked whether the replacement of
the Euclidean distance $\operatorname{dist}(x,\partial\Omega)$ by
$d_\alpha$ in the Hardy inequality \eqref{e1.2} makes it
valid on every domain $\Omega$.
The main target of this article is to give an \emph{affirmative answer} to this question.
More precisely, we establish the corresponding capacitary-distance
Hardy inequality with a positive constant of order $\alpha^{-2}$
for \emph{all the open subsets $\Omega$} of $\mathbb {R}^n$
and show that the dependence of this positive constant
on $\alpha$ is optimal over the class of all the open subsets $\Omega$
of $\mathbb {R}^n$. Since the inequality is homogeneous in $u$, we may normalize
$\|\nabla u\|_{L^2(\Omega)}=1$.

\begin{theorem}
\label{thm:main}
Let $n\in[3,\infty)\cap\mathbb N$. Then the following statements hold.
\begin{itemize}
\item[{\rm (i)}]
There exists a positive constant $C_n$, depending only on $n$,
such that, for any $\alpha\in(0,\infty)$,
\begin{align}
\label{eq:main-hardy}
\sup_{\genfrac{}{}{0pt}{}{u\in C_{\rm c}^\infty(\Omega),\, \|\nabla u\|_{L^2(\Omega)}=1}
{\mathrm{open}\,\mathrm{set}\,\Omega}}\int_\Omega\frac{|u(x)|^2}{d_\alpha(x)^2}\,d x
\le \frac{C_n}{\alpha^2},
\end{align}
where $C_n:=\frac{8T_n}{a_n^2}$,
$a_n:=1-2^{2-n}$, $\kappa_n:=n(n-2)|B(\mathbf0,1)|$, and
$T_n:=\frac1{4\pi}
\left(\frac{4\kappa_n}{na_n}\right)^{\frac2n}$.
\item[{\rm (ii)}] The power $2$ of $\frac{C_n}{\alpha^2}$ in \eqref{eq:main-hardy}
is sharp. Precisely, for any $\alpha\in(0,1]$,
\begin{align*}
\sup_{\genfrac{}{}{0pt}{}{u\in C_{\rm c}^\infty(\Omega),\, \|\nabla u\|_{L^2(\Omega)}=1}
{\mathrm{open}\,\mathrm{set}\,\Omega}}\int_\Omega\frac{|u(x)|^2}{d_\alpha(x)^2}\,d x
\sim\frac{1}{\alpha^2},
\end{align*}
where the positive equivalence constants depend only on $n$.
\end{itemize}
\end{theorem}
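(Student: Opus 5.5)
The plan for (i) follows the strategy announced in the abstract: a variable-time semigroup estimate for Brownian motion killed on exiting $\Omega$, combined with a finite-time exit estimate derived from capacity. Let $P_t^\Omega$ be the killed heat semigroup, i.e.\ Brownian motion with generator $\Delta$, so $p_t(x,y)=(4\pi t)^{-n/2}e^{-|x-y|^2/(4t)}$. This normalization matches the $4\pi$ in $T_n$.

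\textbf{Step 1 (capacity forces exit).} Fix $x\in\Omega$ and $r>d_\alpha(x)$, so that $\operatorname{cap}(\overline{F\cap B(x,r)})\ge\alpha\operatorname{cap}(B(\mathbf0,r))$. Put $K:=\overline{F\cap B(x,r)}$. The hitting probability $h_K(y)=\mathbb P^y(\tau_K<\infty)$ is the equilibrium potential, $h_K(y)=\int \kappa_n^{-1}|y-z|^{2-n}\,d\mu_K(z)$ with $\mu_K(\mathbb R^n)=\operatorname{cap}(K)$. Since $|x-z|\le r$, we get $h_K(x)\ge \operatorname{cap}(K)/(\kappa_n r^{n-2})\gtrsim\alpha$.

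Next we pass to finite time. The paths that hit $K$ only after time $t=Tr^2$ contribute at most $\sup_{z}\mathbb E^z$ of the potential evaluated at $B_t$, bounded via the heat kernel by $\operatorname{cap}(K)\,\kappa_n^{-1}\int_t^\infty(4\pi s)^{-n/2}\,ds\lesssim \operatorname{cap}(K)\,t^{1-n/2}$. Choosing $T=T_n\alpha^{-2/n}$, or a similar choice, makes this tail at most $a_n\alpha/2$. The factor $a_n=1-2^{2-n}$ comes from comparing $r$ with $2r$ in the potential bound. The conclusion is
\[
\mathbb P^x(\tau_\Omega\le Tr^2)\ge c\alpha,
\qquad\text{hence}\qquad
P^\Omega_{Tr^2}\mathbf 1(x)\le 1-c\alpha .
\]
I expect this step to be the main obstacle: the time scaling must be calibrated so that the final power is exactly $\alpha^{-2}$ and not worse. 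It may be necessary to take $t\sim r^2/\alpha$ rather than $\alpha^{-2/n}r^2$, and to track the constants so as to land on $C_n=8T_n/a_n^2$.

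\textbf{Step 2 (variable-time semigroup estimate).} For $u\in C_c^\infty(\Omega)$ and any measurable $t(x)>0$, write
\[
|u(x)|^2\le 2|u(x)-P_{t(x)}^\Omega u(x)|^2+2|P_{t(x)}^\Omega u(x)|^2 .
\]
By Cauchy--Schwarz and Step 1, $|P_t u|^2\le (P_t\mathbf 1)P_t|u|^2\le(1-c\alpha)P_t|u|^2$. For the difference term, use $u-P_tu=\int_0^t(-\Delta)P_s u\,ds$ and spectral calculus, which gives the bound $\int\frac{|u-P_{t(x)}u|^2}{t(x)}\,dx\lesssim \mathcal E(u)$. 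This is a Littlewood--Paley-type maximal estimate, applied by dominating with $\sup_t$ and using the maximal inequality for the symmetric Markov semigroup. Taking $t(x)=T d_\alpha(x)^2$ and absorbing the term $(1-c\alpha)\int |u|^2/t$ requires care. I would instead work with the quadratic form identity
\[
\langle u,u\rangle-\langle P_tu,P_tu\rangle=\int_0^{2t}\mathcal E(P_{s/2}u)\,ds\le 2t\,\mathcal E(u),
\]
localized in $x$ and weighted by $1/t(x)$, to get $c\alpha\int|u|^2/(Td_\alpha^2)\lesssim\int|\nabla u|^2$. This yields the constant $T/(c\alpha)$; the Cauchy--Schwarz loss contributes the second factor of $\alpha$, producing $\alpha^{-2}$.

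\textbf{Part (ii).} The upper bound is (i). For the lower bound I construct $\Omega_\alpha=B(\mathbf0,1)\setminus\bigcup_j \overline{B(z_j,\rho)}$, with many tiny balls arranged on a lattice of spacing $\delta$. The parameters are chosen so that $\operatorname{cap}$ of the holes in each ball of radius $\delta$ is about $\alpha\,\delta^{n-2}$; thus $d_\alpha\approx\delta$ in the interior, while the homogenized holes contribute an effective potential $\sim\alpha\delta^{-2}$ (Cioranescu--Murat). Testing with a fixed bump $u$ supported in $B(\mathbf0,1/2)$, vanishing near holes via capacitary cutoffs, gives $\|\nabla u\|_2^2\lesssim 1+\alpha\delta^{-2}$ and $\int|u|^2/d_\alpha^2\gtrsim\delta^{-2}$. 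Choosing $\delta\sim\sqrt\alpha$ balances these and yields a ratio $\gtrsim\alpha^{-1}\cdot$\,… This only gives $\alpha^{-1}$. To reach $\alpha^{-2}$, I would instead take $\delta$ arbitrary and make $d_\alpha$ governed by a larger scale $R\sim\delta/\alpha^{1/(n-2)}$. Connectedness is kept by using the ball with holes removed.
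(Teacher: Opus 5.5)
\textbf{Part (i).} Your Step~1 works once it is calibrated correctly, and the whole-space version even avoids the paper's factor $a_n$. The point is to compare the tail $\mathbb P^x(t<T_K<\infty)\le\mathbb E^x[h_K(X_t)]\le\operatorname{cap}(K)\int_t^\infty(4\pi s)^{-n/2}\,ds$ with your lower bound $h_K(x)\ge\operatorname{cap}(K)/(\kappa_nr^{n-2})$, not with $\alpha$. Both carry the factor $\operatorname{cap}(K)r^{2-n}$, so a purely dimensional $T$ gives $\mathbb P^x(\tau_\Omega\le Tr^2)\ge\frac12h_K(x)\ge\frac\alpha2$; this is the content of Lemma~\ref{lem:late-hit}. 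Any $\alpha$-dependent $T$ only worsens the constant. Also, not every $r>d_\alpha(x)$ is admissible: pick an admissible $r\in[d_\alpha(x),q\,d_\alpha(x))$ and use monotonicity in $t$.

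The genuine gap is Step~2. With an $x$-dependent time, the absorption after $|u|^2\le2|u-P_tu|^2+2|P_tu|^2$ does not close, because $\int P_{t(x)}|u|^2(x)\,t(x)^{-1}\,dx$ is not controlled by $\int|u|^2/t$. The identity $\|u\|^2-\|P_tu\|^2=2\int_0^t\mathcal E(P_su)\,ds$ is global. For a \emph{constant} $t$ with $P_t\mathbf 1\le1-p$ it yields $p\|u\|^2\le2t\,\mathcal E(u)$, an order-$p^{-1}$ bound, but it has no localized version. In fact the localized claim $p\int|u|^2/t(x)\,dx\lesssim\mathcal E(u)$ is false: combined with the correctly calibrated Step~1 it would give $\int|u|^2/d_\alpha^2\lesssim\alpha^{-1}\int|\nabla u|^2$, contradicting (ii). The extra ``factor of $\alpha$ from Cauchy--Schwarz'' is not derived from anything.

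The missing idea is Maz'ya's capacitary criterion (Lemma~\ref{thm:isocap}). It suffices to prove $\int_K\vartheta^{-1}\lesssim p^{-2}\operatorname{cap}_\Omega(K)$, and this only has to be tested on capacitary functions $g$ with $0\le g\le1$ and $g=1$ on $K$. For such $g$, positivity gives the pointwise bound $g-P^\Omega_{\vartheta(x)}g(x)\ge1-P^\Omega_{\vartheta(x)}\mathbf 1(x)\ge p$ on $K$. The variable-time estimate $\int|g-P_\vartheta g|^2/\vartheta\le\frac12\mathcal E(g)$ then gives $p^2\int_K\vartheta^{-1}\le\frac12\mathcal E(g)$. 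That estimate needs only Cauchy--Schwarz in $s$ plus $\int_0^\infty\|P_sAg\|^2\,ds=\frac12\|A^{1/2}g\|^2$; no maximal inequality is needed. Squaring $p$ is exactly where $\alpha^{-2}$ comes from.

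\textbf{Part (ii).} As you found, periodically distributed holes give only $\alpha^{-1}$. Heuristically, their homogenized potential is already at least of order $\alpha\,d_\alpha^{-2}$ in the bulk, so no choice of scales helps, and the fallback ``$R\sim\delta/\alpha^{1/(n-2)}$'' is not a construction. What you need is an obstacle whose capacity ratio decays only logarithmically across scales.

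The paper removes the codimension-two cylinder $\mathbb R^{n-2}\times\overline{B_{\mathbb R^2}(\mathbf 0,1)}$. By Lemma~\ref{lem:codim-two-cylinder-capacity}, $\operatorname{cap}(\mathcal C_R)\gtrsim R^{n-2}/\log R$, which gives $d_\alpha(z,y)\lesssim|y|$ for all $|y|\le e^L$ with $L=\gamma_n/\alpha$. Test with $v=\eta(z/H)\phi(|y|)$, where $\phi(t)=\log t/L$ on $(1,e^L)$ and $H=e^{3L}$. The two-dimensional logarithmic profile costs energy $\lesssim H^{n-2}/L$. Meanwhile $\int_{e^{3L/4}\le|y|\le e^L}|y|^{-2}\,dy\sim L$ gives $\int|v|^2/d_\alpha^2\gtrsim H^{n-2}L$. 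The ratio is $L^2\sim\alpha^{-2}$, with one factor of $L$ coming from each side.
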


Next, we briefly describe the main ideas of this article.
First, we establish a variable-time $L^2$ semigroup estimate (see Theorem~\ref{thm:variable-time-graph-square}) and apply it to the
killed Brownian semigroup.  This naturally leads to the Brownian exit distance
$\rho_p$ and the Hardy inequality associated with it (see
Theorem~\ref{thm:exit-distance-hardy}).
The second key point is that the capacitary condition defining
$d_\alpha$ has a probabilistic consequence. Precisely, if the
part of the complement $\Omega^\complement$
lying in a ball $B(x,r)$ has
sufficiently large capacity compared with that of $B(x,r)$ itself,
then Brownian motion starting from $x$ has a quantitatively
positive probability of leaving $\Omega$ within $r^2$ (see
Theorem~\ref{thm:compact-killing}). This allows
the Brownian exit distance to be controlled by the capacitary distance and
yields the desired Hardy inequality.
Finally, to prove the sharpness of the
power $\alpha^{-2}$,
we construct a counterexample by placing a long cylindrical
obstacle at the center of the domain $\Omega$.
The logarithmic capacity behavior of the cylinder keeps the capacitary
distance sufficiently small over a large region,
which yields the required lower bound of order $\alpha^{-2}$.

The remainder of this article is organized as follows.

In Section~\ref{sec:notation}, we introduce the Newtonian capacity,
its outer extension, and the relative capacity $\operatorname{cap}_\Omega$.
Next, we further introduce the capacitary distance
$d_\alpha$ and explain in Remark~\ref{rem:literal-mazya-capacity} why we use
the closure in our definition of $d_\alpha$.
We then establish its basic properties and measurability; see
Lemmas~\ref{lem:dalpha-elementary-properties} and
\ref{lem:borel-projection-measurable2}.
We also show that, when the complement is uniformly $2$-thick, the
capacitary distance is equivalent to the classical boundary distance;
see Proposition~\ref{prop:uniform-thickness-distance-comparison}.
Finally, we recall the Maz'ya capacitary characterization in
Theorem~\ref{thm:isocap}, which plays an essential role in the following proofs.

In Section~\ref{sec:euclidean-variable},
we develop the semigroup tool.
After recalling the standard definition of a strongly continuous
semigroup and its generator in Definition \ref{1044},
we establish an $L^2$ estimate showing
that the time-normalized change under the semigroup
is controlled by the global semigroup energy,
where the observation time is allowed to vary from point to point; see
Theorem~\ref{thm:variable-time-graph-square}.

In Section~\ref{sec:brownian-exit}, we specialize this framework to
Brownian motion.  We first introduce the killed Brownian semigroup
$P_t^U$ and then define the Brownian $p$-exit distance $\rho_p$;
see Definition \ref{def:brownian-exit-distance}.
After establishing its measurability, we prove in
Theorem~\ref{thm:exit-distance-hardy} the Hardy inequality
with weight $\rho_p^{-2}$ on any open set. We next consider the
connection between capacity and Brownian hitting.
We prove Theorem~\ref{thm:compact-killing}, showing that if the
part of $\Omega^\complement$ inside a ball $B(x,r)$ has sufficiently large
capacity compared with that of the ball itself, then Brownian motion
starting from the center $x$ has a quantitatively positive probability of
leaving $\Omega$ within time $r^2$.
In the end of this section, combining the resulting
finite-time Brownian exit estimate with
Theorem~\ref{thm:exit-distance-hardy}, we obtain the
desired Hardy inequality; see the proof of
Theorem~\ref{thm:main}(i).

Finally, Section~\ref{sec:alpha-sharpness} is devoted to the sharpness.
We first establish the logarithmic capacity estimate for a
cylinder; see Lemma \ref{lem:codim-two-cylinder-capacity}.
We then construct a long cylindrical domain with a central cylindrical
core removed and use it to prove
Theorem~\ref{thm:main}(ii), showing that the power
$\alpha^{-2}$ in Theorem~\ref{thm:main}(i) is sharp in the class of all
domains.

We end this introduction by making some national conventions.
We always denote by $C$ a positive constant which is
independent of the main parameters involved, but it may vary from line to
line. We use $C_{\alpha,\beta,\ldots}$ to denote a positive constant
depending on the indicated parameters $\alpha,\beta,\ldots$. The notation
$f\lesssim g$ means that $f\le Cg$. If $f\lesssim g$ and $g\lesssim f$, we
write $f\sim g$.
For $x,y\in\mathbb R^n$, we write
$x\cdot y$ and $|x|$ for the Euclidean inner product and norm,
respectively. For a differentiable function $u$, $\nabla u$ denotes
its Euclidean gradient, and for a twice differentiable function $u$,
$\Delta u:=\sum_{i=1}^n\partial_{x_i}^2u.$
For $x\in\mathbb R^n$ and $r\in(0,\infty)$, let
$$B(x,r):=\{y\in\mathbb R^n:|y-x|<r\}$$ and $\overline{B(x,r)}$
be its closure.
For a Lebesgue measurable set $A\subset\mathbb R^n$,
$|A|$ denotes its Lebesgue measure.
For $A,B\subset\mathbb R^n$, define
\begin{align*}
\operatorname{dist}(A,B):=\inf\{|a-b|:a\in A,\ b\in B\},
\qquad
\operatorname{dist}(x,B):=\operatorname{dist}(\{x\},B),
\end{align*}
with the convention that the distance is $\infty$ if one of the
sets is empty. If $U\subset\mathbb R^n$ is open, we write
$A\Subset U$ when $\overline A$ is compact and
$\overline A\subset U$.
Throughout the article, we write $F:=\Omega^\complement$ whenever
an open set $\Omega\subset\mathbb R^n$ is under consideration.
For any set $A$, $\mathbf 1_A$ denotes its characteristic function, while
$\mathbf 1$ denotes the constant function equal to one on the space
under consideration. For $s\in\mathbb R$ and
$a,b\in[-\infty,\infty]$, let $s_+:=\max\{s,0\}$ and $a\wedge b:=\min\{a,b\}$.
For any $a\in\mathbb R^n$, $s\in(0,\infty)$, and $A\subset\mathbb R^n$, we write
$a+sA:=\{a+sy:y\in A\}$. Finally, in all proofs we
consistently retain the notation introduced in the
original theorem (or related statement).

\section{Capacities and the capacitary distance}
\label{sec:notation}

In this section, we introduce the capacities and the capacitary distance
and establish their basic properties, respectively, in Subsections \ref{s2.1}
and \ref{s2.2}.

\subsection{Capacities and basic notation\label{s2.1}}

Throughout this article, we always let $n\in[3,\infty)\cap\mathbb N$.
For any open set $U\subset\mathbb R^n$, the \emph{notation $C_{\rm{c}}^\infty(U)$} denotes the space of
all real-valued smooth functions compactly supported in $U$, and
the \emph{notation $C_{\rm{c}}^{0,1}(\mathbb R^n)$} denotes the space 
of all compactly supported globally
Lipschitz real-valued functions on $\mathbb R^n$.  For
$v\in C_{\rm{c}}^{0,1}(U)$,
$\nabla v$ denotes its almost-everywhere
classical gradient, which agrees with its weak gradient.
The \emph{Sobolev space} $H^1(U)$ consists of all functions
$u\in L^2(U)$ whose distributional gradient
$\nabla u\in L^2(U;\mathbb R^n)$, and let
\begin{align*}
\|u\|_{H^1(U)}
:=
\left[
\|u\|_{L^2(U)}^2+
\|\nabla u\|_{L^2(U;\mathbb R^n)}^2
\right]^{\frac12}.
\end{align*}
The space $H_0^1(U)$ is the closure of $C_{\rm{c}}^\infty(U)$ in $H^1(U)$.
For $v\in C_{\rm{c}}^{0,1}(\mathbb R^n)$, Rademacher's theorem gives
$v\in H^1(\mathbb R^n)$.
A non-negative Borel measure on an open set $U$ is said to be \emph{locally
finite} if it is finite on every compact subset of $U$, and \emph{Radon} if,
in addition, it is inner regular.
For $a\in[0,\infty)$, the \emph{notation} $s\to a^+$ means that
$s\in(a,\infty)$ and $s\to a$.

We first record the two mollification facts used below.
\begin{lemma}
\label{lem:mollifier-cutoff-package}
Let $\rho\in C_{\rm{c}}^\infty(B(\mathbf0,1))$ with
$\rho\ge0$ and
$\int_{\mathbb R^n}\rho(x)\,d x=1,$
and, for $\varepsilon\in(0,\infty)$, let
$\rho_\varepsilon
:=
\varepsilon^{-n}\rho(\frac{\cdot}{\varepsilon}).$
Then the following assertions hold.
\begin{itemize}
\item[{\rm (i)}] If $v\in H^1(\mathbb R^n)$, then
$\rho_\varepsilon*v\longrightarrow v$
in $H^1(\mathbb R^n)$
as $\varepsilon\to0^+$.
\item[{\rm (ii)}] Let $U\subset\mathbb R^n$ be open and $V\subset\mathbb R^n$ satisfy
$\overline V\subset U$.  If $v$ is locally integrable and equal to a
constant $c$ almost everywhere on $U$, then
$\rho_\varepsilon*v=c$ on $V$
whenever
$0<\varepsilon<
\operatorname{dist}(\overline V,\mathbb R^n\setminus U).$
\end{itemize}
\end{lemma}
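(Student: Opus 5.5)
For (i), I will use the Fourier characterization of $H^1(\mathbb R^n)$. Since $\rho\in C_{\rm c}^\infty$ is integrable, Young's inequality gives $\rho_\varepsilon*v\in L^2$. Convolution commutes with weak derivatives, so $\partial_j(\rho_\varepsilon*v)=\rho_\varepsilon*\partial_j v$. Hence
\begin{align*}
\|\rho_\varepsilon*v-v\|_{H^1(\mathbb R^n)}^2
=\int_{\mathbb R^n}\left|\widehat{\rho}(\varepsilon\xi)-1\right|^2
\left(1+4\pi^2|\xi|^2\right)|\widehat v(\xi)|^2\,d\xi .
\end{align*}
Here $\widehat{\rho_\varepsilon}(\xi)=\widehat\rho(\varepsilon\xi)$, and $|\widehat\rho|\le\int_{\mathbb R^n}\rho(x)\,dx=1$, so the integrand is dominated by $4(1+4\pi^2|\xi|^2)|\widehat v(\xi)|^2$. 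This dominating function is integrable precisely because $v\in H^1$. Moreover $\widehat\rho(\varepsilon\xi)\to\widehat\rho(\mathbf 0)=1$ pointwise as $\varepsilon\to0^+$. Dominated convergence then gives the claim.

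Alternatively, one can avoid Fourier analysis and apply the standard $L^2$ approximate-identity fact to $v$ and to each $\partial_j v$. That fact follows from continuity of translations in $L^2$ together with Minkowski's integral inequality:
\begin{align*}
\|\rho_\varepsilon*w-w\|_{L^2}\le\int_{B(\mathbf0,1)}\rho(z)\,\|w(\cdot-\varepsilon z)-w\|_{L^2}\,dz .
\end{align*}
The only point to check is the commutation $\partial_j(\rho_\varepsilon*v)=\rho_\varepsilon*\partial_j v$. This follows by testing against $\varphi\in C_{\rm c}^\infty$ and using Fubini to move the convolution onto the test function as $\check\rho_\varepsilon*\varphi$, where $\check\rho_\varepsilon(x):=\rho_\varepsilon(-x)$; this is again a test function.

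For (ii), fix $x\in V$ and $0<\varepsilon<\operatorname{dist}(\overline V,\mathbb R^n\setminus U)$. The support of $y\mapsto\rho_\varepsilon(x-y)$ lies in $B(x,\varepsilon)$. I claim $B(x,\varepsilon)\subset U$. Indeed, any $y\in B(x,\varepsilon)$ satisfies $\operatorname{dist}(y,\overline V)<\varepsilon$. If $y$ were in $\mathbb R^n\setminus U$, this would contradict the choice of $\varepsilon$. (If $\mathbb R^n\setminus U=\emptyset$, the distance is $\infty$ by convention and the containment is trivial.) Since $v=c$ a.e. on $U$,
\begin{align*}
\rho_\varepsilon*v(x)=\int_{B(x,\varepsilon)}\rho_\varepsilon(x-y)\,c\,dy=c\int_{\mathbb R^n}\rho_\varepsilon(z)\,dz=c .
\end{align*}
The integral is well defined for every $x$ because $v$ is locally integrable.

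There is no real obstacle. The only care needed is the justification of the derivative commutation in (i) and the distance convention in (ii).
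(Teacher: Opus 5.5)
Your proposal is correct and matches the paper. For (ii) you give the same support argument: the paper phrases it as $x-z\in U$ whenever $|z|<\varepsilon$, which is equivalent to your $B(x,\varepsilon)\subset U$. For (i) the paper simply cites the standard $H^1$-mollification theorem from Ziemer, and your approximate-identity argument (with the derivative commutation) is the textbook proof behind that citation; your Fourier/dominated-convergence argument is an equally valid self-contained alternative.
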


\begin{proof}
Item~\textup{(i)} is the standard $H^1$-mollification theorem; see,
for instance, \cite[Chapter~2, Section~2.1]{ZiemerSobolev}.
For \textup{(ii)}, if $x\in V$ and
$z\in\operatorname{supp}\rho_\varepsilon$, then $|z|<\varepsilon$, and hence
$x-z\in U$.  Therefore,
\begin{align*}
(\rho_\varepsilon*v)(x)
=
c\int_{\mathbb R^n}\rho_\varepsilon(z)\,d z
=
c.
\end{align*}
This completes the proof of Lemma \ref{lem:mollifier-cutoff-package}.
\end{proof}

For any compact set $E\subset\mathbb R^n$, we define its
\emph{Newtonian capacity} by
\begin{align*}
\operatorname{cap}(E):=
\inf\left\{
\int_{\mathbb R^n}|\nabla\varphi(x)|^2\,d x:
\varphi\in C_{\rm{c}}^\infty(\mathbb R^n),\
\varphi\ge1\text{ on }E
\right\}.
\end{align*}
For any open set $O\subset\mathbb R^n$, we define
\begin{align*}
\operatorname{cap}(O):=\sup\{\operatorname{cap}(K):K\subset O,\ K\text{ is compact}\}.
\end{align*}
For any  set $A\subset\mathbb R^n$, we define its \emph{outer
Newtonian capacity} by
\begin{align*}
\operatorname{cap}(A):=\inf\{\operatorname{cap}(O):A\subset O,\ O\subset\mathbb R^n
\text{ open}\}.
\end{align*}
Here, and thereafter, we define $\sup\emptyset:=0$ and $\inf\emptyset:=\infty$.
For any open set $\Omega\subset\mathbb R^n$ and a compact set $K\Subset \Omega$, we
define the \emph{capacity of $K$ relative to $\Omega$} by
\begin{align*}
\operatorname{cap}_\Omega(K):=
\inf\left\{
\int_\Omega|\nabla\varphi(x)|^2\,d x:
\varphi\in C_{\rm{c}}^\infty(\Omega),\
\varphi\ge1\text{ on }K
\right\}.
\end{align*}

The following lemma is some basic properties of the capacity; see, for instance, \cite[Theorem~4.15]{EvansGariepy}.
\begin{lemma}
\label{lem:capacity-package}
Then the following statements hold.
\begin{itemize}
\item[{\rm (i)}] If
$A\subset B\subset\mathbb R^n$, then $\operatorname{cap}(A)\le\operatorname{cap}(B)$.
\item[{\rm (ii)}] If
$K_1\supset K_2\supset\cdots$ are compact, then
\begin{align*}
\operatorname{cap}\!\left(\bigcap_{j=1}^\infty K_j\right)
=\lim_{j\to\infty}\operatorname{cap}(K_j).
\end{align*}
If $A_1\subset A_2\subset\cdots$ are Borel, then
\begin{align*}
\operatorname{cap}\!\left(\bigcup_{j=1}^\infty A_j\right)
=\lim_{j\to\infty}\operatorname{cap}(A_j).
\end{align*}
\item[{\rm (iii)}] For any $a\in\mathbb R^n$,
$s\in(0,\infty)$, and any Borel set $A\subset\mathbb R^n$,
$\operatorname{cap}(a+sA)=s^{n-2}\operatorname{cap}(A).$
\item[{\rm (iv)}] If
$D\subset\mathbb R^n$ is open and $E\Subset D$ is compact, then
$\operatorname{cap}_D(E)\ge\operatorname{cap}(E).$
\end{itemize}
\end{lemma}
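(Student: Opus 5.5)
The four items in Lemma~\ref{lem:capacity-package} are standard, and we would verify them directly from the definitions, in the order (i), (iii), (iv), (ii).

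For (i), we first check monotonicity on compact sets. If $K_1\subset K_2$ are compact, every admissible $\varphi$ for $K_2$ is admissible for $K_1$, so $\operatorname{cap}(K_1)\le\operatorname{cap}(K_2)$. The supremum over compact subsets is then monotone on open sets, and the infimum over open supersets is monotone on arbitrary sets. Consistency of the three definitions also needs checking. For compact $K$, the outer capacity equals the original one: any admissible $\varphi\ge1$ on $K$ can be rescaled to $(1+\varepsilon)\varphi$, which is $>1$ on an open neighbourhood $O\supset K$, and every compact subset of $O$ is then dominated. For open $O$, the outer capacity is trivially $\operatorname{cap}(O)$.

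For (iii), we use the change of variables $\varphi\mapsto\varphi(a+s\,\cdot)$. For a compact set $K$, the map $\varphi\mapsto\varphi((\cdot-a)/s)$ is a bijection between the admissible classes for $K$ and for $a+sK$. Its Dirichlet integral scales by $s^{n-2}$, since the gradient contributes $s^{-2}$ and the Jacobian $s^{n}$. This identity passes to open sets and then to arbitrary sets, because the affine map permutes compact sets and open sets.

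For (iv), we extend each $\varphi\in C_{\rm c}^\infty(D)$ by zero. The extension is admissible in the definition of $\operatorname{cap}(E)$, and its Dirichlet integral over $\mathbb R^n$ is the same as over $D$. Taking the infimum gives $\operatorname{cap}_D(E)\ge\operatorname{cap}(E)$.

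Item (ii) is the main obstacle. For decreasing compact sets, monotonicity gives $\operatorname{cap}(\bigcap K_j)\le\lim_j\operatorname{cap}(K_j)$. For the reverse inequality, take $\varphi$ admissible with $\varphi\ge1$ on $K:=\bigcap_j K_j$. Then $(1+\varepsilon)\varphi>1$ on an open set $O\supset K$, and compactness forces $K_j\subset O$ for all large $j$. This yields $\lim_j\operatorname{cap}(K_j)\le(1+\varepsilon)^2\int_{\mathbb R^n}|\nabla\varphi(x)|^2\,dx$, and we let $\varepsilon\to0$.

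The increasing Borel case is the Choquet-type continuity from below. Our plan is to work in the Hilbert space obtained by completing $C_{\rm c}^\infty(\mathbb R^n)$ under $\|\nabla\cdot\|_{L^2}$, for which the Sobolev inequality gives an embedding into $L^{2n/(n-2)}$. We would first prove the strong subadditivity
\[
\operatorname{cap}(A\cup B)+\operatorname{cap}(A\cap B)\le\operatorname{cap}(A)+\operatorname{cap}(B),
\]
using $\max$ and $\min$ of near-optimal potentials, after a truncation and mollification step justified by Lemma~\ref{lem:mollifier-cutoff-package}. For each $j$, we then choose an open set $O_j\supset A_j$ with $\operatorname{cap}(O_j)\le\operatorname{cap}(A_j)+2^{-j}\varepsilon$. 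Strong subadditivity makes the sets $\bigcup_{i\le j}O_i$ have capacity at most $\operatorname{cap}(A_j)+\varepsilon$. We then use the fact that equilibrium potentials of increasing open sets converge weakly, so the capacity of the union is bounded by the limit. This last step is the delicate part: it requires lower semicontinuity of the Dirichlet norm under weak convergence together with quasi-everywhere convergence. Because this is lengthy, we would ultimately rely on the reference already cited, \cite[Theorem~4.15]{EvansGariepy}, for this item.
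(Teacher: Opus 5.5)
Your proposal is correct. It differs from the paper mainly because the paper gives no argument at all: it cites \cite[Theorem~4.15]{EvansGariepy} for all four items.

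Your direct verifications of (i), (iii), (iv) and of the decreasing-compact half of (ii) are sound. They also work with the paper's own three-step definition: compact sets via $C_{\rm c}^\infty$ test functions, open sets by inner approximation, and arbitrary sets by outer approximation. The cited theorem is stated for a capacity given by a single variational formula on all sets, so invoking it tacitly requires identifying the two definitions; your argument avoids that.

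For the increasing half of (ii), your Choquet-type scheme is the right one, but you have misplaced the difficulty. With the paper's definition, continuity from below along increasing open sets $U_1\subset U_2\subset\cdots$ is immediate. Every compact $K\subset\bigcup_jU_j$ already lies in a single $U_j$, so $\operatorname{cap}(\bigcup_jU_j)\le\lim_j\operatorname{cap}(U_j)$. No weak convergence of equilibrium potentials and no quasi-everywhere convergence is needed.

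The only real work is strong subadditivity, and the paper already contains the tool for it.
\begin{itemize}
\item By \eqref{eq:literal-cap-closure} in Remark~\ref{rem:literal-mazya-capacity}, Lipschitz test functions compute the capacity of compact sets.
\item The functions $\max\{\varphi_1,\varphi_2\}$ and $\min\{\varphi_1,\varphi_2\}$ have Dirichlet integrals summing to those of $\varphi_1$ and $\varphi_2$, which gives the inequality for compact sets.
\item You pass to open sets by splitting a compact subset of $O_1\cup O_2$ into compact pieces inside $O_1$ and $O_2$, adjoining a given compact subset of $O_1\cap O_2$ to both pieces.
\item You pass to arbitrary sets by outer approximation.
\end{itemize}
With these two observations the lemma becomes fully self-contained, and your final fallback on the citation is unnecessary.
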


The following lemma follows from \cite[Section~2.2.4, p.~148]{MazyaSobolev}.

\begin{lemma}\label{1709}
For any $x\in\mathbb R^n$ and $r\in(0,\infty)$,
\begin{align*}
\operatorname{cap}(B(x,r))
=\operatorname{cap}(\overline{B(x,r)})
=\kappa_n r^{n-2},
\qquad
\kappa_n:=n(n-2)|B(\mathbf0,1)|.
\end{align*}
\end{lemma}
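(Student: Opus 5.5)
The plan is to prove Lemma~\ref{1709} in three steps: reduce to the unit ball, compute the capacity of the closed unit ball from above with an explicit radial test function, and then obtain the matching lower bound together with the equality for the open ball.

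\emph{Reduction.} By translation and dilation invariance (Lemma~\ref{lem:capacity-package}(iii)) it suffices to take $x=\mathbf0$ and $r=1$. Monotonicity gives $\operatorname{cap}(B(\mathbf0,1))\le\operatorname{cap}(\overline{B(\mathbf0,1)})$. For the reverse inequality, the definition of the capacity of an open set gives
\begin{align*}
\operatorname{cap}(B(\mathbf0,1))\ge\operatorname{cap}(\overline{B(\mathbf0,s)})=s^{n-2}\operatorname{cap}(\overline{B(\mathbf0,1)})
\end{align*}
for every $s<1$; letting $s\to1^-$ gives equality. It therefore remains to show $\operatorname{cap}(\overline{B(\mathbf0,1)})=\kappa_n$.

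\emph{Upper bound.} Use the equilibrium potential $\varphi(y)=\min\{1,|y|^{2-n}\}$. A direct computation gives
\begin{align*}
\int_{|y|>1}|\nabla\varphi|^2\,dy=(n-2)^2\,n|B(\mathbf0,1)|\int_1^\infty t^{2-2n}\,t^{n-1}\,dt=(n-2)\,n|B(\mathbf0,1)|=\kappa_n .
\end{align*}
This $\varphi$ is neither smooth nor compactly supported. To make it admissible, first truncate: take $(\varphi-\delta)_+/(1-\delta)$, which is compactly supported, Lipschitz, equal to $1$ on the unit ball, and has energy tending to $\kappa_n$ as $\delta\to0^+$. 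Then dilate slightly so that the function equals $1$ on a neighbourhood of $\overline{B(\mathbf0,1)}$, and mollify. By Lemma~\ref{lem:mollifier-cutoff-package} the mollified function stays $\ge1$ on the closed ball and its Dirichlet energy converges. This yields $\operatorname{cap}(\overline{B(\mathbf0,1)})\le\kappa_n$.

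\emph{Lower bound.} Take any admissible $\varphi$ and set $\psi=\min\{\varphi_+,1\}$; this does not increase the energy. For each direction $\theta\in\mathbb S^{n-1}$, $\psi(\theta)=1$ and $\psi(R\theta)=0$ for large $R$. Writing $\omega_{n-1}:=n|B(\mathbf0,1)|$, Cauchy--Schwarz gives
\begin{align*}
1\le\left(\int_1^\infty|\partial_t\psi(t\theta)|\,dt\right)^2\le\int_1^\infty|\partial_t\psi(t\theta)|^2t^{n-1}\,dt\int_1^\infty t^{1-n}\,dt=\frac{1}{n-2}\int_1^\infty|\nabla\psi(t\theta)|^2t^{n-1}\,dt .
\end{align*}
Integrating over $\theta\in\mathbb S^{n-1}$ gives $\int|\nabla\varphi|^2\ge(n-2)\,\omega_{n-1}=\kappa_n$.

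The main obstacle is the approximation in the upper bound, where $\min\{1,|y|^{2-n}\}$ must be replaced by smooth compactly supported functions with only an arbitrarily small loss of energy. The truncation, dilation and mollification described above handle this, and the remaining steps are routine.
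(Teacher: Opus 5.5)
Your proof is correct. The paper does not prove this lemma at all; it simply quotes it from \cite[Section~2.2.4]{MazyaSobolev}, so your argument is a self-contained substitute rather than a variant of an argument in the paper.

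\textbf{What you do.} You reduce to the unit ball by scaling. You pass from the closed to the open ball by inner approximation with $\overline{B(\mathbf 0,s)}$, $s\to1^-$, which matches exactly the paper's definition of the capacity of an open set. For the upper bound you take the equilibrium potential $\min\{1,|y|^{2-n}\}$ and regularize it by truncation, a slight dilation and mollification, using Lemma~\ref{lem:mollifier-cutoff-package}. For the lower bound you use a Cauchy--Schwarz inequality along rays. Each step checks out, including the energy identity $(n-2)\,n|B(\mathbf0,1)|=\kappa_n$ and the need for the dilation so that part~(ii) of the mollifier lemma applies.

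\textbf{What your route buys.} It verifies the normalization directly. With the paper's definition (Dirichlet energy of $C_{\rm c}^\infty$ functions that are $\ge1$ on $E$, with no normalizing factor), the constant is indeed $\kappa_n$. A reader relying on the citation would otherwise have to check the source's conventions against the paper's.

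\textbf{Two small remarks.}
\begin{itemize}
\item The truncation $\psi=\min\{\varphi_+,1\}$ in the lower bound is unnecessary. You can run the ray argument directly on the smooth admissible $\varphi$, using $\varphi(\theta)\ge1$ and $\varphi(R\theta)=0$. This also avoids the Rademacher/Fubini justification that $|\partial_t\psi(t\theta)|\le|\nabla\psi(t\theta)|$ for almost every ray.
\item Your lower bound only uses $\varphi\ge1$ on the unit sphere. Together with monotonicity, it therefore also proves $\operatorname{cap}(\partial B(x,r))=\kappa_n r^{n-2}$. This is the ``standard fact'' the paper invokes without proof in the proof of Theorem~\ref{thm:main}(ii).
\end{itemize}
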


\subsection{The capacitary distance\label{s2.2}}

For any open set $\Omega\subset\mathbb R^n$, $F:=\Omega^\complement$,
$\alpha\in(0,\infty)$, and $x\in\Omega$, we define the \emph{capacitary distance}
\begin{align}\label{eq:dalpha-def}
d_{\alpha,\Omega}(x)
:=
\inf\left\{r\in(0,\infty):
\operatorname{cap}\left(\overline{F\cap B(x,r)}\right)
\ge\alpha\operatorname{cap}(B(\mathbf0,r))\right\}.
\end{align}
When there is no ambiguity, we simply write $d_\alpha$ instead of
$d_{\alpha,\Omega}$.

\begin{remark}\label{rem:literal-mazya-capacity}
In Problem~8 of \cite{MazyaProblems}, Maz'ya writes the capacitary
distance as in the form
\begin{align}\label{912}
d_\alpha(x)
:=
\inf\left\{
r>0:
\operatorname{cap}(F\cap B(x,r))
\ge
\alpha\operatorname{cap}(B(\mathbf0,r))
\right\},
\end{align}
without taking the closure of $F\cap B(x,r)$.
On the other hand, in \cite{MazyaProblems}
Maz'ya defines the Wiener capacity for any compact set.
Since $F\cap B(x,r)$ need not be closed, the capacity appearing in
\cite[Problem~8]{MazyaProblems} therefore requires an interpretation.  There are two natural
ways to interpret it.  First, one may interpret the symbol $\operatorname{cap}$
in \eqref{912} as the outer Newtonian capacity.  Alternatively,
one may use the Wiener capacity and read it
literally for an arbitrary bounded set $A\subset\mathbb R^n$, which gives
\begin{align*}
\operatorname{cap}_{\mathrm{var}}(A):=
\inf\left\{
\int_{\mathbb R^n}|\nabla v(x)|^2\,d x:
v\in C_{\rm{c}}^{0,1}(\mathbb R^n),\
v\ge1\text{ on }A
\right\}.
\end{align*}
Now, we claim that
\begin{align}\label{eq:literal-cap-closure}
\operatorname{cap}_{\mathrm{var}}(A)=\operatorname{cap}(\overline A).
\end{align}
Indeed, the continuity gives $\operatorname{cap}_{\mathrm{var}}(A)=\operatorname{cap}_{\mathrm{var}}(\overline A)$, and the inclusion
$C_{\rm c}^\infty(\mathbb R^n)\subset C_{\rm c}^{0,1}(\mathbb R^n)$ yields
$\operatorname{cap}_{\mathrm{var}}(\overline A)\le\operatorname{cap}(\overline A)$.  Conversely, let
$v\in C_{\rm c}^{0,1}(\mathbb R^n)$ satisfy $v\ge1$ on $\overline A$ and
fix $\delta\in(0,1)$.  Let
\begin{align*}
\Phi_\delta(t):=
\min\left\{1,\frac{\max\{t,0\}}{1-\delta}\right\},
\end{align*}
and $w_\delta:=\Phi_\delta\circ v$.
The function
$\Phi_\delta$ is Lipschitz, satisfies $\Phi_\delta(0)=0$, and
$\operatorname{Lip}(\Phi_\delta)=(1-\delta)^{-1}.$
Hence, by the Sobolev chain rule (see, for instance,
\cite[Chapter~2, Sections~2.1--2.2]{ZiemerSobolev}),
$w_\delta\in H^1(\mathbb R^n)$ and
$|\nabla w_\delta|
\le (1-\delta)^{-1}|\nabla v|$
a.e. on $\mathbb R^n$.
Moreover, since $\Phi_\delta(0)=0$, it follows that
$\operatorname{supp} w_\delta\subset\operatorname{supp} v$ and hence $\operatorname{supp} w_\delta$ is compact,
which further implies that
\begin{align}\label{857}
\int_{\mathbb R^n}|\nabla w_\delta|^2\,d x
\le(1-\delta)^{-2}
\int_{\mathbb R^n}|\nabla v|^2\,d x.
\end{align}
Moreover, $w_\delta=1$ on the open set
$U_\delta:=\{x:v(x)>1-\delta\}$, which contains $\overline A$.
Choose an open set
$V_\delta$ with $\overline A\subset V_\delta$ and
$\overline{V_\delta}\subset U_\delta$.  For every sufficiently small
$\varepsilon>0$,
Lemma \ref{lem:mollifier-cutoff-package}(ii) shows that
$\rho_\varepsilon*w_\delta=1$ on $V_\delta$, while
Lemma \ref{lem:mollifier-cutoff-package}\textup{(i)}
shows $\rho_\varepsilon*w_\delta$ convergence to $w_\delta$ as
$\varepsilon\to0^+$ in $H^1(\mathbb R^n)$.
Letting $\varepsilon\to0^+$ and using the $H^1$ convergence and \eqref{857},
we obtain
\begin{align*}
\operatorname{cap}(\overline A)
\le\lim_{\varepsilon\to0}
\int_{\mathbb R^n}|\nabla (\rho_\varepsilon*w_\delta)|^2\,d x
=\int_{\mathbb R^n}|\nabla w_\delta|^2\,d x
\le(1-\delta)^{-2}\int_{\mathbb R^n}|\nabla v|^2\,d x.
\end{align*}
Letting $\delta\to0^+$ and taking the infimum over $v$, we find that
the above claim \eqref{eq:literal-cap-closure} holds.

In this article, we use the closure in the definition of the capacitary distance for two
reasons. First, by \eqref{eq:literal-cap-closure}, the literal
interpretation of Maz'ya's Wiener capacity gives
$\operatorname{cap}_{\mathrm{var}}(F\cap B(x,r))
=\operatorname{cap}(\overline{F\cap B(x,r)})$.
Thus, our definition agrees exactly with this literal interpretation.
Second, if one instead interprets the capacity in \eqref{912} as the
outer Newtonian capacity, then, by the monotonicity of the capacity,
we obtain $\operatorname{cap}(F\cap B(x,r))
\le\operatorname{cap}(\overline{F\cap B(x,r)})$.
It follows that the corresponding capacitary distance is no smaller than the one
used in this article, and hence the associated Hardy inequality is
weaker and follows from the one proved in this article. For these reasons,
we use $\operatorname{cap}(\overline{F\cap B(x,r)})$ in the definition of
the capacitary distance.
\end{remark}

\begin{lemma}
\label{lem:dalpha-elementary-properties}
Let $\alpha,\alpha_1,\alpha_2\in(0,\infty)$.  The following assertions hold.
\begin{itemize}
\item[\rm (i)] If $\Omega_1\subset\Omega_2$, then, for every
$x\in\Omega_1$,
$d_{\alpha,\Omega_1}(x)\le d_{\alpha,\Omega_2}(x).$
\item[\rm (ii)] If $\alpha_1\le\alpha_2$, then, for every $x\in\Omega$,
$d_{\alpha_1}(x)\le d_{\alpha_2}(x).$
\item[\rm (iii)] For every $x\in\Omega$,
$\operatorname{dist}(x,\Omega^\complement)\le d_\alpha(x).$
\item[\rm (iv)] If $\alpha>1$, then $d_\alpha\equiv\infty$.
\end{itemize}
\end{lemma}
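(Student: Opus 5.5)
All four assertions follow from monotonicity of capacity (Lemma~\ref{lem:capacity-package}(i)) and the explicit capacity of balls (Lemma~\ref{1709}). For each item we compare the admissible sets
\begin{align*}
R_{\alpha,\Omega}(x):=\left\{r\in(0,\infty):\operatorname{cap}\left(\overline{F\cap B(x,r)}\right)\ge\alpha\kappa_n r^{n-2}\right\},
\end{align*}
where $d_{\alpha,\Omega}(x)=\inf R_{\alpha,\Omega}(x)$. Inclusion of these sets reverses the order of the infima.

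\emph{Item (i).} If $\Omega_1\subset\Omega_2$, then $F_2:=\Omega_2^\complement\subset F_1:=\Omega_1^\complement$. Hence $\overline{F_2\cap B(x,r)}\subset\overline{F_1\cap B(x,r)}$, and monotonicity gives $R_{\alpha,\Omega_2}(x)\subset R_{\alpha,\Omega_1}(x)$. Taking infima yields the claim, with the convention $\inf\emptyset=\infty$.

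\emph{Item (ii).} If $\alpha_1\le\alpha_2$, then $\alpha_2\kappa_n r^{n-2}\ge\alpha_1\kappa_n r^{n-2}$. Therefore $R_{\alpha_2}(x)\subset R_{\alpha_1}(x)$, and the claim follows.

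\emph{Item (iii).} Set $\delta:=\operatorname{dist}(x,\Omega^\complement)$, which is positive since $\Omega$ is open and $x\in\Omega$. For $r\le\delta$, the open ball $B(x,r)$ misses $F$, so $F\cap B(x,r)=\emptyset$. Its closure is empty and has capacity $0$ (the function $\varphi\equiv0$ is admissible). Since $\alpha\kappa_n r^{n-2}>0$, no $r\in(0,\delta]$ lies in $R_\alpha(x)$. Thus $R_\alpha(x)\subset(\delta,\infty)$ and $d_\alpha(x)\ge\delta$. If $F=\emptyset$, then $\delta=\infty$ and $R_\alpha(x)=\emptyset$, so both sides equal $\infty$.

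\emph{Item (iv).} Since $\overline{F\cap B(x,r)}\subset\overline{B(x,r)}$, Lemma~\ref{1709} gives
\[
\operatorname{cap}\left(\overline{F\cap B(x,r)}\right)\le\kappa_n r^{n-2}<\alpha\kappa_n r^{n-2}
\]
whenever $\alpha>1$. Hence $R_\alpha(x)=\emptyset$ and $d_\alpha(x)=\infty$.

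None of these steps poses a real obstacle. The only points needing care are the convention $\inf\emptyset=\infty$, so that the inequalities remain meaningful when the sets are empty, and the requirement in (iii) that $F\cap B(x,r)$ be empty, not just small.
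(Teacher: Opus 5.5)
Your proof is correct and follows the paper's own argument: monotonicity of capacity (reversing the inclusion of the admissible radius sets) for (i) and (ii), emptiness of $F\cap B(x,r)$ for $r$ below $\operatorname{dist}(x,F)$ for (iii), and comparison with $\operatorname{cap}(\overline{B(x,r)})=\kappa_n r^{n-2}$ for (iv). Your explicit use of the convention $\inf\emptyset=\infty$ and your treatment of the case $F=\emptyset$ tidy up points the paper leaves implicit.
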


\begin{proof}
Items~\textup{(i)} and~\textup{(ii)} follow immediately from the definition
and the monotonicity of the Newtonian capacity. 
To prove \textup{(iii)}, fix
$x\in\Omega$.  If $r\in(0,\operatorname{dist}(x,F))$, then $F\cap B(x,r)=\emptyset$ and hence
$\operatorname{cap}\left(\overline{F\cap B(x,r)}\right)=0.$
Thus, no such $r$ is admissible in \eqref{eq:dalpha-def}, which gives
$\operatorname{dist}(x,F)\le d_\alpha(x)$.

Finally, if $\alpha>1$, then, by monotonicity and translation invariance of the capacity,
no radius is admissible in \eqref{eq:dalpha-def}. In this case,
$d_\alpha\equiv\infty$.
This completes the proof of Lemma \ref{lem:dalpha-elementary-properties}.
\end{proof}

Now, we consider the measurability of the capacitary distance.
The following lemma is the Euclidean case of the measurable projection
theorem; see, for instance,
Cohn \cite[Proposition~8.4.4, p.~264]{CohnMeasureTheory}.

\begin{lemma}
\label{lem:borel-projection-measurable}
Let $A\subset\mathbb R^m\times(0,\infty)$ be a Borel set.
Then its projection onto $\mathbb R^m$ is Lebesgue measurable.
\end{lemma}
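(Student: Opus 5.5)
This is the Euclidean case of the measurable projection theorem, and the paper cites it from Cohn. The plan is to deduce it from the general statement: for a complete measure space $(X,\mathcal A,\mu)$ and a Polish space $Y$, the projection onto $X$ of any set in $\mathcal A\otimes\mathcal B(Y)$ lies in $\mathcal A$. We take $X=\mathbb R^m$ with $\mathcal A$ the Lebesgue $\sigma$-algebra, which is complete, and $Y=(0,\infty)$, which is Polish. A Borel subset of $\mathbb R^m\times(0,\infty)$ belongs to $\mathcal B(\mathbb R^m)\otimes\mathcal B((0,\infty))$, and this is contained in $\mathcal A\otimes\mathcal B(Y)$. The claim then follows at once.

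If a self-contained argument is preferred, it goes through analytic sets in three steps.

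\textbf{Step 1: the projection is analytic.} Every Borel set $A$ in the Polish space $\mathbb R^m\times(0,\infty)$ is a continuous image of the Baire space $\mathbb N^{\mathbb N}$. The projection $\pi$ is continuous, so $\pi(A)$ is also a continuous image of $\mathbb N^{\mathbb N}$, which means $\pi(A)$ is an analytic (Suslin) subset of $\mathbb R^m$.

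\textbf{Step 2: analytic sets are universally measurable.} This is the main obstacle: showing that analytic sets are measurable with respect to the completion of every $\sigma$-finite Borel measure, here Lebesgue measure. I would argue via Choquet capacitability. The outer Lebesgue measure $\lambda^*$ is a Choquet capacity relative to the compact sets: it is monotone, continuous along increasing sequences of arbitrary sets, and continuous along decreasing sequences of compacts. Choquet's theorem then gives compact sets $K\subset\pi(A)$ with $\lambda(K)$ arbitrarily close to $\lambda^*(\pi(A)\cap B)$ for each bounded ball $B$. The key technical step inside Choquet's theorem is constructing these compacts through the Suslin scheme: one successively restricts the finite initial segments of the index sequences while keeping outer measure nearly maximal, using upward continuity of $\lambda^*$. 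The compactness of the resulting limit set comes from König-type finiteness of the restricted tree.

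\textbf{Step 3: conclusion.} Step 2 yields an $F_\sigma$ set $E\subset\pi(A)\cap B$ with $\lambda(E)=\lambda^*(\pi(A)\cap B)$. Hence $\pi(A)\cap B$ differs from the Borel set $E$ by a set of outer measure zero, which is Lebesgue measurable by completeness. Taking the union over countably many balls $B$ shows that $\pi(A)$ is Lebesgue measurable.
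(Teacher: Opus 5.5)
Your proposal is correct and takes essentially the paper's route: the paper gives no argument beyond citing the measurable projection theorem (Cohn, Proposition~8.4.4), exactly as in your first paragraph, and your Choquet-capacitability sketch is a sound unpacking of that theorem. One thing to fix is that the general theorem you quote needs the complete measure to be $\sigma$-finite, or should be stated as universal measurability of the projection; as written it fails, for instance for counting measure on $\mathcal B(\mathbb R)$, but this is harmless here because Lebesgue measure is $\sigma$-finite.
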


\begin{lemma}
\label{lem:borel-projection-measurable2}
Let $\Omega\subset\mathbb R^n$ be an open set.  For every
$\alpha\in(0,\infty)$, $d_\alpha$ is Lebesgue measurable on $\Omega$.
\end{lemma}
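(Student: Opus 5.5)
The plan is to write each superlevel set as a projection of a Borel set and then apply Lemma~\ref{lem:borel-projection-measurable}. Fix $t\in(0,\infty)$. By the definition \eqref{eq:dalpha-def},
\begin{align*}
\{x\in\Omega:d_\alpha(x)<t\}
=\left\{x\in\Omega:\ \exists\, r\in(0,t)\ \text{with}\ g(x,r)\ge\alpha\kappa_n r^{n-2}\right\},
\end{align*}
where $g(x,r):=\operatorname{cap}(\overline{F\cap B(x,r)})$ and we used Lemma~\ref{1709}. This set is the projection onto $\mathbb R^n$ of
\begin{align*}
A_t:=\left\{(x,r)\in\Omega\times(0,t):\ g(x,r)-\alpha\kappa_n r^{n-2}\ge0\right\}.
\end{align*}
So the task reduces to showing that $g$ is Borel on $\Omega\times(0,\infty)$. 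Once that is done, $A_t$ is Borel, and Lemma~\ref{lem:borel-projection-measurable} shows that $\{d_\alpha<t\}$ is Lebesgue measurable for every $t$. Hence $d_\alpha$, which takes values in $(0,\infty]$, is Lebesgue measurable.

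To prove that $g$ is Borel, I will show it is upper semicontinuous in $(x,r)$. Let $(x_k,r_k)\to(x,r)$ and set $K_k:=\overline{F\cap B(x_k,r_k)}$ and $K:=\overline{F\cap B(x,r)}$. I would like to compare $K_k$ with $K$, but the open ball behaves badly: points of $F$ near the sphere $\partial B(x,r)$ may lie in $K_k$ without lying in $K$. Since only a limsup bound is needed, I will instead use the larger compact sets $K^\delta:=F\cap\overline{B(x,r+\delta)}$. For $k$ large, $K_k\subset K^\delta$, so $\limsup_k g(x_k,r_k)\le\operatorname{cap}(K^\delta)$. As $\delta\downarrow0$, the sets $K^\delta$ decrease to $F\cap\overline{B(x,r)}$, and Lemma~\ref{lem:capacity-package}(ii) gives $\operatorname{cap}(K^\delta)\to\operatorname{cap}(F\cap\overline{B(x,r)})$.

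This limit is $\operatorname{cap}$ of the closed ball slice, which in general is not $g(x,r)$, so $g$ itself need not be upper semicontinuous. To fix this, I will introduce the auxiliary function $h(x,r):=\operatorname{cap}(F\cap\overline{B(x,r)})$. The argument above shows that $h$ is upper semicontinuous, hence Borel. For fixed $x$, $r\mapsto h(x,r)$ is nondecreasing, and $F\cap B(x,r)\subset F\cap\overline{B(x,s)}$ for every $s<r$. I then want to relate $g$ to $h$ through a left limit in $r$. One has $g(x,r)\ge\sup_{s<r}h(x,s)$, since $\overline{F\cap B(x,r)}\supset F\cap\overline{B(x,s)}$. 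For the reverse bound, $\overline{F\cap B(x,r)}\subset F\cap\overline{B(x,r)}$, which only gives $g\le h$. So, instead of identifying $g$ exactly, I will work with the countable rational approximation
\begin{align*}
g(x,r)=\lim_{m\to\infty}\operatorname{cap}\left(\overline{F\cap B(x,r)}\cap\overline{B(x,r-1/m)}^{\,\complement}\cup\cdots\right)
\end{align*}
only if that is needed. The simpler route is to bypass $g$ entirely. Because $r\mapsto\alpha\kappa_n r^{n-2}$ is continuous and $g(x,\cdot)$, $h(x,\cdot)$ are nondecreasing, with $h(x,s)\le g(x,r)\le h(x,r)$ for $s<r$, the condition ``there exists $r<t$ with $g(x,r)\ge\alpha\kappa_n r^{n-2}$'' should be equivalent, up to the endpoint, to ``there exists $r<t$ with $h(x,r)\ge\alpha\kappa_n r^{n-2}$''. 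I would verify this equivalence, for instance by showing that both infima coincide via a left-limit argument in $r$, and then replace $g$ by $h$ in $A_t$.

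I expect the main obstacle to be this boundary-sphere issue, that is, making rigorous that the infimum defined by $g$ equals the one defined by $h$, or else finding a direct Borel proof for $g$ (for example, writing $g(x,r)=\sup_{s<r,\,s\in\mathbb Q}\operatorname{cap}(\overline{F\cap B(x,s)})$ is false in general, so it cannot be used naively). If the equivalence fails, the fallback is to prove that $g$ is Borel directly. For that I would use Lemma~\ref{lem:capacity-package}(ii) to write $g(x,r)=\inf_j\operatorname{cap}(U_j(x,r))$, where $U_j(x,r)$ is the open $1/j$-neighbourhood of $F\cap B(x,r)$, and then use the fact that $\operatorname{cap}$ of an open set is a supremum over compacts to obtain lower semicontinuity of each $(x,r)\mapsto\operatorname{cap}(U_j(x,r))$.
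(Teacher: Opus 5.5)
Your reduction is exactly the paper's: prove that $g(x,r)=\operatorname{cap}(\overline{F\cap B(x,r)})$ is Borel, then project $A_t$ via Lemma~\ref{lem:borel-projection-measurable}.

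\textbf{The detour through $h$ fails.} Your route via $h(x,r):=\operatorname{cap}(F\cap\overline{B(x,r)})$ rests on an equivalence that is false pointwise, not just ``up to the endpoint''. Take $\alpha=1$, $\Omega:=\mathbb R^n\setminus\partial B(\mathbf 0,1)$ and $x=\mathbf 0$.
\begin{itemize}
\item For $r\le1$, $F\cap B(\mathbf 0,r)=\emptyset$.
\item For $r>1$, $\overline{F\cap B(\mathbf 0,r)}=\partial B(\mathbf 0,1)$, so $g(\mathbf 0,r)\le\kappa_n<\kappa_nr^{n-2}$.
\item Hence no radius is admissible and $d_1(\mathbf 0)=\infty$.
\item Yet $h(\mathbf 0,1)=\operatorname{cap}(\partial B(\mathbf 0,1))=\kappa_n$ meets the threshold at $r=1$.
\end{itemize}
For $\alpha<1$, take $F$ to be a closed cap of the unit sphere with capacity exactly $\alpha\kappa_n$; such a cap exists by continuity of the capacity of caps in the opening angle. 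The same thing happens.

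The sandwich $h(x,s)\le g(x,r)\le h(x,r)$ only gives $g(x,r)\ge\alpha\kappa_n s^{n-2}$, never $g(x,r)\ge\alpha\kappa_nr^{n-2}$. So a jump of $h(x,\cdot)$ at a radius where the threshold is met with equality is invisible to $g$. A rescue would need an almost-everywhere statement that is not evident. Drop this part, together with the displayed ``rational approximation'', which is not a meaningful formula.

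\textbf{Your fallback is a correct proof; make it the main argument.}
\begin{itemize}
\item Let $E:=\overline{F\cap B(x,r)}$, which is compact. Then $E\subset U_j(x,r)\subset\{y:\operatorname{dist}(y,E)\le 1/j\}$, and the latter compact sets decrease to $E$. Lemma~\ref{lem:capacity-package}(i)--(ii) then give $g(x,r)=\inf_j\operatorname{cap}(U_j(x,r))$.
\item For lower semicontinuity, write out the compactness step. A compact $K\subset U_j(x,r)=\bigcup_{f\in F\cap B(x,r)}B(f,1/j)$ is covered by finitely many $B(f_i,1/j)$ with $|f_i-x|<r$.
\item These $f_i$ still lie in $B(x',r')$ for $(x',r')$ near $(x,r)$, so $\operatorname{cap}(U_j(x',r'))\ge\operatorname{cap}(K)$ there. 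Taking the supremum over $K$ gives lower semicontinuity.
\item It is exactly the openness of $B(x,r)$ that makes this work, so the boundary-sphere problem you worried about never arises.
\end{itemize}
Hence $g$ is a countable infimum of lower semicontinuous functions, and each $\{g<a\}$ is a countable union of closed sets.

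\textbf{Comparison with the paper.} The paper reaches the same Borel class by a different mechanism. It fixes a countable family $\{\phi_j\}$ dense in $C_{\rm c}^\infty(\mathbb R^n)$ for $\|\cdot\|_{L^\infty}+\|\nabla\cdot\|_{L^2}$. A $(1+\delta)$-rescaling then shows $g(x,r)=\inf\{\int_{\mathbb R^n}|\nabla\phi_j|^2\,d x:\phi_j>1\text{ on }F\cap B(x,r)\}$. Each admissibility condition is the closed condition $\operatorname{dist}(x,F\cap\{\phi_j\le1\})\ge r$.

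That argument works directly from the variational definition but needs a separability step. Yours uses only monotonicity, continuity along decreasing compact sets, and inner regularity on open sets. It is therefore shorter and applies verbatim to any capacity with these three properties.
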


\begin{proof}
For $(x,r)\in\mathbb R^n\times(0,\infty)$, let
\begin{align*}
c(x,r):=\operatorname{cap}\left(\overline{F\cap B(x,r)}\right).
\end{align*}
We first show that $c$ is Borel measurable.
To this end, equip $C_{\rm{c}}^\infty(\mathbb R^n)$ with the metric
\begin{align*}
d(\phi,\psi):=
\|\phi-\psi\|_{L^\infty}
+\|\nabla\phi-\nabla\psi\|_{L^2}.
\end{align*}
Endow $C_{\rm{c}}(\mathbb R^n)\times L^2(\mathbb R^n;\mathbb R^n)$
with the norm
$\|(g,G)\|:=\|g\|_{L^\infty}+\|G\|_{L^2}$.
Then the map
$T:C_{\rm{c}}^\infty(\mathbb R^n)\to
C_{\rm{c}}(\mathbb R^n)\times L^2(\mathbb R^n;\mathbb R^n)$,
$T\phi:=(\phi,\nabla\phi)$ for $\phi\in C_{\rm c}^\infty(\mathbb R^n)$, is an isometric embedding.
Since both $C_{\rm{c}}(\mathbb R^n)$ and
$L^2(\mathbb R^n;\mathbb R^n)$ are separable,
it follows that the product space is separable,
and hence $C_{\rm{c}}^\infty(\mathbb R^n)$ is
separable for the metric $d$.
Choose a countable dense sequence $\{\phi_j\}_{j\in\mathbb N}$ satisfying
$\phi_0\equiv0$, and, for any $j\in\mathbb N$, let
$e_j:=\int_{\mathbb R^n}|\nabla\phi_j|^2\,d x$.

We claim that, for every $(x,r)\in\mathbb R^n\times(0,\infty)$,
\begin{align}\label{eq:countable-closure-cap}
c(x,r)
=
\inf\{e_j:\,j\in\mathbb N,\ \phi_j>1\text{ on }F\cap B(x,r)\}.
\end{align}
Indeed, for any $j\in\mathbb N$ such that
$\phi_j>1$ on $F\cap B(x,r)$, the continuity of $\phi_j$ implies that
$\phi_j\ge1$ on $\overline{F\cap B(x,r)}$.
Thus, $\phi_j$ satisfies the condition in the definition of
$c(x,r)$, and hence
\begin{align*}
c(x,r)
\le
\inf\{e_j:\,j\in\mathbb N,\ \phi_j>1\text{ on }F\cap B(x,r)\}.
\end{align*}
Conversely, let
$\phi\in C_{\rm{c}}^\infty(\mathbb R^n)$ satisfy
$\phi\ge1$ on $\overline{F\cap B(x,r)}$,
let $\delta,\eta\in(0,\infty)$, and let
$f:=(1+\delta)\phi$.
By the density in $C_{\rm c}^\infty(\mathbb R^n)$ of 
$\{\phi_j\}_{j\in\mathbb N}$ according to the metric $d$, we conclude that
there exists
$j\in\mathbb N$ such that
\begin{align*}
\left\|\phi_j-f\right\|_{L^\infty}<\frac{\delta}{2}
\qquad\text{and}\quad
\left\|\nabla\phi_j-\nabla f\right\|_{L^2}<\eta,
\end{align*}
which further implies that
$\|\nabla\phi_j\|_{L^2}
\le (1+\delta)\|\nabla\phi\|_{L^2}+\eta$
and, for every $y\in F\cap B(x,r)$,
$\phi_j(y)
\ge(1+\delta)\phi(y)-\frac{\delta}{2}>1.$
It follows that
\begin{align*}
\inf\{e_j:\,j\in\mathbb N,\ \phi_j>1\text{ on }F\cap B(x,r)\}
\le
\left[(1+\delta)\|\nabla\phi\|_{L^2}+\eta\right]^2.
\end{align*}
Letting $\eta\to0^+$ and $\delta\to0^+$ and then taking the
infimum over all admissible $\phi$, we conclude that
\begin{align*}
c(x,r)
\ge
\inf\{e_j:\, j\in\mathbb N,\ \phi_j>1\text{ on }F\cap B(x,r)\}.
\end{align*}
Combining above two sides inequalities, we find that
the above claim \eqref{eq:countable-closure-cap} holds.

For any $j\in\mathbb N$, let
$C_j:=F\cap\{y\in\mathbb R^n:\phi_j(y)\le1\}.$
Then $C_j$ is closed, and $\phi_j>1$ on $F\cap B(x,r)$ if and only if
$\operatorname{dist}(x,C_j)\ge r$, where
$\operatorname{dist}(x,\emptyset):=\infty$.
From this and \eqref{eq:countable-closure-cap}, we infer that,
for any $a\in[0,\infty)$,
\begin{align*}
\{(x,r):c(x,r)<a\}
&=
\bigcup_{\{j\in\mathbb N:e_j<a\}}
\{(x,r):\phi_j>1\text{ on }F\cap B(x,r)\}\\
&=
\bigcup_{\{j\in\mathbb N:e_j<a\}}
\{(x,r):\operatorname{dist}(x,C_j)\ge r\}.
\end{align*}
The right-hand side is a countable union of closed sets in
$\mathbb R^n\times(0,\infty)$, and hence $c$ is Borel measurable.

For any $t\in(0,\infty)$,
$\{x\in\Omega:d_\alpha(x)<t\}$ is the projection onto
$\mathbb R^n$ of the Borel set
\begin{align*}
\left\{
(x,r)\in\Omega\times(0,\infty):
r<t,\quad
c(x,r)\ge\alpha\kappa_n r^{n-2}
\right\}.
\end{align*}
By Lemma~\ref{lem:borel-projection-measurable}, we find that
this projection is Lebesgue measurable
and hence $d_\alpha$ is Lebesgue measurable on $\Omega$,
which completes the proof of Lemma~\ref{lem:borel-projection-measurable2}.
\end{proof}

Next, we compare the capacitary distance with the classical boundary
distance under the standard uniform thickness assumption. We say that
$F=\Omega^\complement$ is \emph{uniformly $2$-thick} if there exists
$c_n\in(0,1]$ such that, for every $y\in F$ and $r>0$,
\begin{align}\label{eq:uniform-2-thickness}
\operatorname{cap}_{B(y,2r)}\left(F\cap\overline{B(y,r)}\right)
\ge c_n
\operatorname{cap}_{B(y,2r)}\left(\overline{B(y,r)}\right);
\end{align}
see, for example, \cite{AnconaHardy,LewisUniformlyFat,Wannebo1990}.

\begin{proposition}\label{prop:uniform-thickness-distance-comparison}
Assume that $F=\Omega^\complement$ is uniformly $2$-thick.
Then there exists $\alpha_0\in(0,1]$ such that, for every
$\alpha\in(0,\alpha_0]$ and $x\in\Omega$,
\begin{align}\label{eq:uniform-thickness-dalpha-comparison}
\operatorname{dist}(x,F)\le d_\alpha(x)\le3\operatorname{dist}(x,F).
\end{align}
\end{proposition}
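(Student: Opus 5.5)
The lower bound in \eqref{eq:uniform-thickness-dalpha-comparison} is exactly Lemma~\ref{lem:dalpha-elementary-properties}(iii), so the work is in the upper bound. Fix $x\in\Omega$ and set $\delta:=\operatorname{dist}(x,F)$. If $F=\emptyset$ there is nothing to prove, since then $\delta=\infty$. Otherwise $F$ is closed, so there is $y\in F$ with $|x-y|=\delta$. The idea is to show that the single radius $r=3\delta$, or every $r$ slightly larger than $3\delta$, is admissible in \eqref{eq:dalpha-def} once $\alpha\le\alpha_0$. Note that $B(y,2\delta)\subset B(x,3\delta)$, and hence $F\cap\overline{B(y,\delta)}\subset\overline{F\cap B(x,3\delta)}$. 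The inclusion holds because every point of $\overline{B(y,\delta)}$ lies at distance at most $2\delta<3\delta$ from $x$, so $F\cap\overline{B(y,\delta)}\subset F\cap B(x,3\delta)$.

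Next we convert the thickness condition \eqref{eq:uniform-2-thickness} at scale $r=\delta$ into a lower bound for the Newtonian capacity. Write $K:=F\cap\overline{B(y,\delta)}$. Two standard comparisons are needed.

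First, for a compact $K\subset\overline{B(y,\delta)}$, the relative and global capacities are comparable:
\begin{align*}
\operatorname{cap}(K)\ge c\,\operatorname{cap}_{B(y,2\delta)}(K)\quad\text{with } c=c(n)>0.
\end{align*}
I would prove this by taking a near-optimal $\varphi\in C_{\rm c}^\infty(\mathbb R^n)$ for $\operatorname{cap}(K)$ with $0\le\varphi\le1$ (truncate and mollify as in Remark~\ref{rem:literal-mazya-capacity}). Multiplying by a cutoff $\eta$ with $\eta=1$ on $B(y,\tfrac32\delta)$, $\operatorname{supp}\eta\subset B(y,2\delta)$ and $|\nabla\eta|\lesssim\delta^{-1}$ gives
\begin{align*}
\int|\nabla(\eta\varphi)|^2\lesssim\int|\nabla\varphi|^2+\delta^{-2}\int_{B(y,2\delta)}\varphi^2 .
\end{align*}
The last term is controlled by the Sobolev inequality and H\"older's inequality: $\delta^{-2}\|\varphi\|_{L^2(B(y,2\delta))}^2\lesssim\|\varphi\|_{L^{2^*}}^2\lesssim\|\nabla\varphi\|_{L^2}^2$. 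This is where $n\ge3$ is used. Hence $\operatorname{cap}_{B(y,2\delta)}(K)\lesssim\operatorname{cap}(K)$.

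Second, by scaling (Lemma~\ref{lem:capacity-package}(iii) applied to relative capacity), $\operatorname{cap}_{B(y,2\delta)}(\overline{B(y,\delta)})=c_n'\delta^{n-2}$ with an explicit $c_n'$; indeed it equals $(n-2)|\mathbb S^{n-1}|\delta^{n-2}/(1-2^{2-n})$.

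Combining these with Lemma~\ref{lem:capacity-package}(i) and Lemma~\ref{1709} gives
\begin{align*}
\operatorname{cap}\left(\overline{F\cap B(x,3\delta)}\right)\ge\operatorname{cap}(K)\ge c\,c_n c_n'\delta^{n-2}=\frac{c\,c_nc_n'}{3^{n-2}\kappa_n}\operatorname{cap}(B(\mathbf 0,3\delta)).
\end{align*}
Setting $\alpha_0:=\min\{1,c\,c_nc_n'3^{2-n}\kappa_n^{-1}\}$, the radius $3\delta$ is admissible for every $\alpha\le\alpha_0$, so $d_\alpha(x)\le3\delta$.

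The main obstacle is the relative-to-global capacity comparison in the first step. Its direction is the nontrivial one: Lemma~\ref{lem:capacity-package}(iv) gives only the reverse inequality, so the cutoff argument relying on the Sobolev inequality is essential.
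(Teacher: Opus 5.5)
Your proof is correct and follows essentially the same route as the paper. Both use the Sobolev--H\"older cutoff argument to get $\operatorname{cap}_{B(y,2\delta)}(E)\lesssim\operatorname{cap}(E)$, apply the thickness condition at a nearest point $y\in F$, and use the inclusion $F\cap\overline{B(y,\delta)}\subset\overline{F\cap B(x,3\delta)}$. The only cosmetic difference is that you compute $\operatorname{cap}_{B(y,2\delta)}(\overline{B(y,\delta)})$ exactly, whereas the paper just bounds it below by $\operatorname{cap}(\overline{B(y,\delta)})=\kappa_n\delta^{n-2}$ via Lemma~\ref{lem:capacity-package}(iv).
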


\begin{proof}
We first claim that there exists a constant $C_n\in(0,\infty)$ such that,
for any $y\in\mathbb R^n$, $r\in(0,\infty)$, and any compact
$E\subset\overline{B(y,r)}$,
\begin{align}\label{eq:relative-absolute-capacity-comparison}
\operatorname{cap}_{B(y,2r)}(E)\le C_n\operatorname{cap}(E).
\end{align}
Indeed, let $\eta\in C_{\rm{c}}^\infty(B(y,2r))$ satisfy
$\eta=1$ on $\overline{B(y,r)}$ and
$|\nabla\eta|\lesssim r^{-1}$.  If
$\varphi\in C_{\rm{c}}^\infty(\mathbb R^n)$ satisfies
$\varphi\ge1$ on $E$, then
$\eta\varphi\in C_{\rm{c}}^\infty(B(y,2r))$ and
$\eta\varphi=\varphi\ge1$ on $E$. Hence, $\eta\varphi$ satisfies the condition in the definition
of $\operatorname{cap}_{B(y,2r)}(E)$.  By H\"older's
inequality and the Sobolev inequality, we obtain
\begin{align*}
r^{-2}\int_{B(y,2r)}|\varphi|^2\,d x
\lesssim
\left(\int_{\mathbb R^n}|\varphi|^{\frac{2n}{n-2}}\,d x\right)^{\frac{n-2}{n}}
\lesssim
\int_{\mathbb R^n}|\nabla\varphi|^2\,d x,
\end{align*}
and hence
\begin{align*}
\int_{B(y,2r)}|\nabla(\eta\varphi)|^2\,d x
\lesssim
\int_{B(y,2r)}\eta^2|\nabla\varphi|^2\,d x
+r^{-2}\int_{B(y,2r)}|\varphi|^2\,d x
\lesssim
\int_{\mathbb R^n}|\nabla\varphi|^2\,d x.
\end{align*}
Taking the infimum, we conclude that the above claim
\eqref{eq:relative-absolute-capacity-comparison} holds.

Fix $x\in\Omega$ and $y\in F$ such
that $|x-y|=\operatorname{dist}(x,F)$. Let
$\delta:=\operatorname{dist}(x,F)$.
From
\eqref{eq:relative-absolute-capacity-comparison},
\eqref{eq:uniform-2-thickness}, and
Lemmas~\ref{lem:capacity-package}(iv) and \ref{1709}, we infer that
\begin{align*}
C_n\operatorname{cap}(F\cap\overline{B(y,\delta)})
&\ge \operatorname{cap}_{B(y,2\delta)}(F\cap\overline{B(y,\delta)})
\ge c_n\operatorname{cap}_{B(y,2\delta)}\left(\overline{B(y,\delta)}\right)\\
&\ge c_n\operatorname{cap}\left(\overline{B(y,\delta)}\right)
\ge c_n\kappa_n\delta^{n-2}.
\end{align*}
Since
$F\cap\overline{B(y,\delta)}\subset \overline{F\cap B(x,3\delta)}$,
it follows that
\begin{align*}
\operatorname{cap}\left(\overline{F\cap B(x,3\delta)}\right)
\ge \frac{c_n}{C_n}\kappa_n\delta^{n-2}
=\frac{c_n}{C_n}3^{2-n}\operatorname{cap}(B(\mathbf0,3\delta)).
\end{align*}
With $\alpha_0:=\frac{c_n}{C_n}3^{2-n}$, this gives
$d_\alpha(x)\le3\delta=3\operatorname{dist}(x,F)$ for every
$\alpha\in(0,\alpha_0]$.  The lower bound in
\eqref{eq:uniform-thickness-dalpha-comparison} follows from
Lemma~\ref{lem:dalpha-elementary-properties}\textup{(iii)}.
This completes the proof of Proposition
\ref{prop:uniform-thickness-distance-comparison}.
\end{proof}

\begin{remark}
Let $F=\Omega^\complement$ be uniformly $2$-thick. Then, by
Proposition~\ref{prop:uniform-thickness-distance-comparison},
we find that, for every
sufficiently small $\alpha\in(0,\infty)$, Theorem~\ref{thm:main}(i)
recovers the classical Hardy inequality
\eqref{eq:intro-classical-p-hardy} in the case $p=2$.
\end{remark}

The following is the $p=q=2$ case of Maz'ya's classical
isocapacitary characterization of Hardy inequalities; see
\cite[Theorem~8.5]{MazyaIsocapacitary}.

\begin{lemma}
\label{thm:isocap}
Let $\mu$ be a non-negative locally finite Borel measure on $\Omega$.
\begin{itemize}
\item[{\rm (i)}] If there exists a positive constant $C_1$ such that,
for all $u\in C_{\rm{c}}^\infty(\Omega)$,
\begin{align*}
\int_\Omega|u(x)|^2\,d\mu(x)
\le C_1\int_\Omega|\nabla u(x)|^2\,d x,
\end{align*}
then, for any compact $K\Subset\Omega$,
$\mu(K)\le C_1\operatorname{cap}_\Omega(K).$
\item[{\rm (ii)}] Conversely, if there exists a constant $C_2\in[0,\infty)$ such that, for any compact $K\Subset\Omega$,
\begin{align*}
\mu(K)\le C_2\operatorname{cap}_\Omega(K),
\end{align*}
then, for all $u\in C_{\rm{c}}^\infty(\Omega)$,
\begin{align*}
\int_\Omega|u(x)|^2\,d\mu(x)
\le4C_2\int_\Omega|\nabla u(x)|^2\,d x
.
\end{align*}
\end{itemize}
\end{lemma}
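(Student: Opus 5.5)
Part (i) is immediate from the definitions. If the inequality holds for all $u$, fix a compact $K\Subset\Omega$ and any $\varphi\in C_{\rm c}^\infty(\Omega)$ with $\varphi\ge1$ on $K$. Then
\begin{align*}
\mu(K)\le\int_K|\varphi|^2\,d\mu\le\int_\Omega|\varphi|^2\,d\mu\le C_1\int_\Omega|\nabla\varphi|^2\,dx .
\end{align*}
Taking the infimum over all such $\varphi$ gives $\mu(K)\le C_1\operatorname{cap}_\Omega(K)$.

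For (ii) I would follow Maz'ya's level-set (capacitary strong-type) argument. Fix $u\in C_{\rm c}^\infty(\Omega)$. Replacing $u$ by $|u|$, which is Lipschitz, compactly supported, and has $|\nabla|u||=|\nabla u|$ a.e., we may assume $u\ge0$. Because $\operatorname{cap}_\Omega$ was defined with smooth test functions, I will first check that Lipschitz, compactly supported test functions give the same relative capacity. This uses the truncation-plus-mollification argument of Remark~\ref{rem:literal-mazya-capacity}, with Lemma~\ref{lem:mollifier-cutoff-package}; the supports stay compact in $\Omega$ once $\varepsilon$ is small.

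Set $N_t:=\{u\ge t\}$, which is compact in $\Omega$ for $t>0$. By the layer-cake formula and the hypothesis,
\begin{align*}
\int_\Omega u^2\,d\mu=\int_0^\infty 2t\,\mu(N_t)\,dt\le C_2\int_0^\infty 2t\operatorname{cap}_\Omega(N_t)\,dt .
\end{align*}
It therefore suffices to prove the capacitary strong-type inequality
\begin{align*}
\int_0^\infty 2t\operatorname{cap}_\Omega(N_t)\,dt\le 4\int_\Omega|\nabla u|^2\,dx .
\end{align*}

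To prove it, decompose dyadically: for $k\in\mathbb Z$ let $u_k:=\min\{(u-2^{k-1})_+,2^{k-1}\}/2^{k-1}$. Each $u_k$ is Lipschitz with compact support in $\Omega$ and equals $1$ on $N_{2^k}$. Hence
\begin{align*}
\operatorname{cap}_\Omega(N_{2^k})\le 4^{1-k}\int_{\{2^{k-1}<u<2^k\}}|\nabla u|^2\,dx .
\end{align*}
Since $t\mapsto\operatorname{cap}_\Omega(N_t)$ is nonincreasing, $\int_{2^{k}}^{2^{k+1}}2t\operatorname{cap}_\Omega(N_t)\,dt\le 3\cdot4^{k}\operatorname{cap}_\Omega(N_{2^k})$. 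Summing over $k$ (the shells are disjoint) already gives the inequality with constant $12$.

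The main obstacle is obtaining the sharp constant $4$ rather than some absolute constant. For that I would use a continuous version. Let $\Phi(t):=\operatorname{cap}_\Omega(N_t)$ and, for $s<t$, test with $\min\{(u-s)_+,t-s\}/(t-s)$; this yields $\Phi(t)\le(t-s)^{-2}\int_{\{s<u<t\}}|\nabla u|^2$. Setting $g(t):=\int_{\{u<t\}}|\nabla u|^2$ gives $\Phi(t)\le g'(t)/h'(t)$ for any increasing reparametrization via the Cauchy--Schwarz formulation $\bigl(\int_s^t d\tau\bigr)^2\le\int_s^t \frac{d\tau}{\dots}\cdots$, i.e.\ Maz'ya's estimate $\int_0^\infty\Phi(t)\,d(t^2)\le 4\int|\nabla u|^2$. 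One obtains it by writing $\int_0^\infty \Phi(t)\,2t\,dt$, bounding $\Phi(t)$ through $1/\Phi(t)\ge\int_0^t(\int_{\{u=\tau\}}|\nabla u|\,dH^{n-1})^{-1}\dots$ via the coarea formula, and applying the one-dimensional Hardy inequality $\int_0^\infty(\int_0^t f)^2\,\dots\le4\int_0^\infty f^2\dots$, whose constant is exactly $4$. Alternatively, since the constant $4$ is simply quoted from \cite[Theorem~8.5]{MazyaIsocapacitary}, I would cite that result for the sharp constant and present the dyadic argument as the self-contained proof up to the value of the constant.
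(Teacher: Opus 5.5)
The paper does not prove this lemma; it quotes it from \cite[Theorem~8.5]{MazyaIsocapacitary}. Your part (i) is the standard argument and is correct. In part (ii), three steps are correct: the layer-cake reduction to the capacitary strong-type inequality $\int_0^\infty 2t\,\Phi(t)\,dt\le 4\int_\Omega|\nabla u|^2\,dx$, with $\Phi(t):=\operatorname{cap}_\Omega(\{|u|\ge t\})$; the check that Lipschitz test functions are admissible; and the dyadic estimate. So you do have a complete self-contained proof, but of the inequality with $12C_2$, not $4C_2$.

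The constant is not cosmetic here. The identity $4\cdot\frac{1}{2p^2}=\frac{2}{p^2}$ in Theorem~\ref{thm:exit-distance-hardy} is what produces $C_n=8T_n/a_n^2$ in Theorem~\ref{thm:main}. With $12$ you would get $24T_n/a_n^2$, although the $\alpha^{-2}$ rate survives. The dyadic scheme also cannot be tuned to reach $4$. With levels $q^k$ the same argument gives the constant $q^2(q+1)/(q-1)$, whose minimum over $q>1$ is $\frac{11+5\sqrt5}{2}\approx 11.09$, attained at the golden ratio.

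The gap is in your ``continuous version''. The bound $\Phi(t)\le g'(t)/h'(t)$ does not follow from the truncations $\min\{(u-s)_+,t-s\}/(t-s)$. Those give $\Phi(t)\le (g(t)-g(s))/(t-s)^2$, which tends to $+\infty$ as $s\uparrow t$ whenever $g'(t)>0$. So no test function that is linear in $u$ on a single layer gives the sharp bound. Two things are missing: (a) optimizing over nonlinear profiles $\lambda(u)$ that use all levels below $t$, and (b) a change of variables before the Hardy inequality is applied.

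Concretely, with $u\ge0$ and $M:=\max u$, put $\psi(\sigma):=\int_{\{u=\sigma\}}|\nabla u|\,d\mathcal H^{n-1}$. By the coarea formula, $\int_\Omega|\nabla u|^2\,dx=\int_0^M\psi(\sigma)\,d\sigma$, and by Sard's theorem $\psi>0$ for a.e. $\sigma\in(0,M)$.

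Take Lipschitz $\lambda$ with $\lambda(0)=0$ and $\lambda=1$ on $[t,\infty)$. Then $\lambda(u)$ is admissible for $N_t$, and coarea gives $\int_\Omega|\nabla\lambda(u)|^2\,dx=\int_0^t\lambda'(\sigma)^2\psi(\sigma)\,d\sigma$. Choosing $\lambda'\propto\psi^{-1}$ (truncate $\psi^{-1}$ at height $m$ to keep $\lambda$ Lipschitz, then let $m\to\infty$) yields
\[
\Phi(t)\le \frac{1}{F(t)},\qquad F(t):=\int_0^t\psi(\sigma)^{-1}\,d\sigma .
\]
Now let $\theta$ be the inverse of $F$, so that $\theta(0)=0$ and $\theta'(s)=\psi(\theta(s))$, and set $S:=F(M)\in(0,\infty]$. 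Then
\[
\int_0^M 2t\,\Phi(t)\,dt\le\int_0^M\frac{2t\,dt}{F(t)}=\int_0^S\frac{2\theta(s)\theta'(s)}{s}\,ds
\le 2\Bigl(\int_0^S\frac{\theta(s)^2}{s^2}\,ds\Bigr)^{1/2}\Bigl(\int_0^S\theta'(s)^2\,ds\Bigr)^{1/2}
\le 4\int_0^S\theta'(s)^2\,ds=4\int_0^M\psi(\sigma)\,d\sigma .
\]
The last inequality is the one-dimensional Hardy inequality $\int_0^S(\theta(s)/s)^2\,ds\le 4\int_0^S\theta'(s)^2\,ds$. So the sharp constant $4$ comes from Hardy applied to $\theta$ in the variable $s=F(t)$, not in $t$, and this is exactly the step your sketch elides.

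If you instead cite Maz'ya for the constant, your treatment coincides with the paper's, and the dyadic argument is a correct but lossy supplement.
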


\section{A variable-time semigroup estimate}
\label{sec:euclidean-variable}
In this section, we establish a variable-time estimate for strongly
continuous semigroups on $L^2$.
\begin{definition}\label{1044}
Let $(\mathsf X,\mathcal B,\mathfrak m)$ be a sigma-finite measure
space. A family $\{T_t\}_{t\ge0}$ of bounded linear operators on
$L^2(\mathsf X,\mathfrak m)$ is called a \emph{strongly continuous
semigroup} if
$T_0=I$,
$T_{s+t}=T_sT_t$ for every $s,t\ge0$,
and
\begin{align*}
\lim_{t\to0^+}
\|T_tf-f\|_{L^2(\mathsf X,\mathfrak m)}
=0
\end{align*}
for every $f\in L^2(\mathsf X,\mathfrak m)$.
We define
\begin{align*}
\operatorname{Dom}(A)
:=
\left\{
f\in L^2(\mathsf X,\mathfrak m):
\lim_{t\to0^+}\frac{f-T_tf}{t}
\text{ exists in }L^2(\mathsf X,\mathfrak m)
\right\},
\end{align*}
and, for any $f\in\operatorname{Dom}(A)$,
\begin{align*}
Af
:=
\lim_{t\to0^+}\frac{f-T_tf}{t}.
\end{align*}
Then $A$ is a linear operator on $\rm{Dom}(A)$,
called the \emph{infinitesimal
generator} of the semigroup $\{T_t\}_{t\in[0,\infty)}$.
\end{definition}
For every $g\in\operatorname{Dom}(A)$ and $t\ge0$, we have
$T_tg\in\operatorname{Dom}(A)$,
$AT_tg=T_tAg$,
and
\begin{align}
\label{eq:semigroup-fundamental-identity}
g-T_tg
=
\int_0^tT_s(Ag)\,d s
\end{align}
in $L^2(\mathsf X,\mathfrak m)$.
Let $g\in\operatorname{Dom}(A)$ and choose a measurable representative of $g$,
still denoted by $g$.
Since $s\mapsto T_s(Ag)$ is continuous as an
$L^2(\mathsf X,\mathfrak m)$-valued map, it follows that,
for every $R\in(0,\infty)$,
\begin{align*}
\int_0^R\|T_s(Ag)\|_{L^2(\mathsf X,\mathfrak m)}^2\,d s<\infty.
\end{align*}
Hence, using $
L^2((0,R);L^2(\mathsf X,\mathfrak m))
\cong
L^2((0,R)\times\mathsf X,\,d s\otimes\,d\mathfrak m),$
we may choose a jointly measurable function
$H_g:(0,\infty)\times\mathsf X\to\mathbb R$
such that
\begin{align}\label{2155}
H_g(s,\cdot)=T_s(Ag)
\end{align}
in $L^2(\mathsf X,\mathfrak m)$
for almost every $s\in(0,\infty)$.
Using Minkowski's integral inequality, we find that, for every $R\in(0,\infty)$,
\begin{align*}
\left\|
\int_0^R |H_g(s,\cdot)|\,d s
\right\|_{L^2(\mathsf X,\mathfrak m)}
\le
\int_0^R
\|T_s(Ag)\|_{L^2(\mathsf X,\mathfrak m)}
\,d s
<\infty,
\end{align*}
which further implies that there exists a measurable set
$N_g\subset\mathsf X$ with $\mathfrak m(N_g)=0$ such that,
for every $x\in\mathsf X\setminus N_g$ and every $R\in(0,\infty)$,
\begin{align*}
\int_0^R |H_g(s,x)|\,d s<\infty.
\end{align*}
For any $t\in[0,\infty)$, we define
\begin{align*}
\widetilde T_tg(x)
:=
\begin{cases}
\displaystyle
g(x)-\int_0^tH_g(s,x)\,d s,
&x\in\mathsf X\setminus N_g,\\[2mm]
0,
&x\in N_g.
\end{cases}
\end{align*}
Then the map
$(t,x)\longmapsto \widetilde T_tg(x)$
is measurable. Moreover, from
\eqref{eq:semigroup-fundamental-identity},
we infer that, for every $t\ge0$,
$\widetilde T_tg=T_tg$
in $L^2(\mathsf X,\mathfrak m)$.
Here, and thereafter, we still write
$T_tg$ instead of $\widetilde T_tg$. Let
$\vartheta:\mathsf X\to(0,\infty]$
be measurable and let
$\mathsf X_\vartheta
:=
\{x\in\mathsf X:\vartheta(x)<\infty\}.$
We define
\begin{align*}
D_{\vartheta,g}(x)
:=
\begin{cases}
\displaystyle
g(x)-T_{\vartheta(x)}g(x)
=
\int_0^{\vartheta(x)}H_g(s,x)\,d s,
&x\in\mathsf X_\vartheta\setminus N_g,\\[3mm]
0,
&x\notin\mathsf X_\vartheta
\text{ or }x\in N_g.
\end{cases}
\end{align*}
In particular, $D_{\vartheta,g}$ is measurable.

\begin{theorem}
\label{thm:variable-time-graph-square}
Let $\{T_t\}_{t\ge0}$ be a strongly continuous semigroup on
$L^2(\mathsf X,\mathfrak m)$ and $A$ be its positive
infinitesimal generator. Let $g\in\operatorname{Dom}(A)$ and
$\vartheta:\mathsf X\to(0,\infty]$ be a measurable
function. Then
\begin{align*}
\int_{\mathsf X_\vartheta}
\frac{|D_{\vartheta,g}(x)|^2}{\vartheta(x)}
\,d\mathfrak m(x)
\le
\int_0^\infty
\left\|T_s(Ag)\right\|_{L^2(\mathsf X,\mathfrak m)}^2\,d s.
\end{align*}
Further assume that $A$ is non-negative and self-adjoint so that
$T_t=e^{-tA}$. Then
\begin{align*}
\int_{\mathsf X_\vartheta}
\frac{|D_{\vartheta,g}(x)|^2}{\vartheta(x)}
\,d\mathfrak m(x)
\le
\frac12\left\|A^{\frac12}g\right\|_{L^2(\mathsf X,\mathfrak m)}^2.
\end{align*}
\end{theorem}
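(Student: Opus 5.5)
Pointwise, for $x\in\mathsf X_\vartheta\setminus N_g$, write $D_{\vartheta,g}(x)=\int_0^{\vartheta(x)}H_g(s,x)\,ds$ and apply Cauchy--Schwarz on $(0,\vartheta(x))$:
\begin{align*}
\frac{|D_{\vartheta,g}(x)|^2}{\vartheta(x)}
\le\frac{1}{\vartheta(x)}\cdot\vartheta(x)\int_0^{\vartheta(x)}|H_g(s,x)|^2\,ds
\le\int_0^\infty|H_g(s,x)|^2\,ds .
\end{align*}
The factor $\vartheta(x)$ produced by Cauchy--Schwarz cancels exactly with the weight. This is what makes a variable, pointwise-chosen time harmless. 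Integrating over $\mathsf X_\vartheta$ and using Tonelli, which is legitimate since $H_g$ is jointly measurable and the integrand is non-negative, the right-hand side becomes $\int_0^\infty\|H_g(s,\cdot)\|_{L^2}^2\,ds$. By \eqref{2155}, $H_g(s,\cdot)=T_s(Ag)$ in $L^2$ for a.e. $s$, which gives the first inequality.

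One technical point needs care. The function $H_g$ was constructed on each $(0,R)$ via the identification $L^2((0,R);L^2)\cong L^2((0,R)\times\mathsf X)$, so I must make sure a single jointly measurable $H_g$ on $(0,\infty)\times\mathsf X$ works for all $R$. I will take it as given from the preceding construction, for instance by gluing consistent representatives on $(0,k)$, $k\in\mathbb N$. If $\int_0^\infty\|T_sAg\|^2\,ds=\infty$, there is nothing to prove.

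For the second inequality, assume $A$ is non-negative and self-adjoint, with spectral measure $E$. Then $\|T_sAg\|^2=\int_{[0,\infty)}\lambda^2e^{-2s\lambda}\,d\langle E_\lambda g,g\rangle$. By Tonelli in $(s,\lambda)$,
\begin{align*}
\int_0^\infty\|T_sAg\|^2\,ds=\int_{(0,\infty)}\lambda^2\cdot\frac{1}{2\lambda}\,d\langle E_\lambda g,g\rangle=\frac12\int\lambda\,d\langle E_\lambda g,g\rangle=\frac12\|A^{1/2}g\|^2 .
\end{align*}
The point $\lambda=0$ contributes nothing. Alternatively, one can avoid the spectral theorem: $\frac{d}{ds}\|A^{1/2}T_sg\|^2=-2\|AT_sg\|^2$, and $\|A^{1/2}T_sg\|\to0$ as $s\to\infty$.

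The main obstacle is only the measure-theoretic bookkeeping: the a.e.-in-$s$ identification, the null set $N_g$, and points where $\vartheta=\infty$. The latter are excluded simply by integrating over $\mathsf X_\vartheta$. The analytic core is the single Cauchy--Schwarz step above.
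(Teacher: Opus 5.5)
Your proposal is correct and follows essentially the same route as the paper: a pointwise Cauchy--Schwarz bound on $(0,\vartheta(x))$ whose factor $\vartheta(x)$ cancels the weight, then Tonelli to reach $\int_0^\infty\|T_s(Ag)\|_{L^2}^2\,ds$, then the spectral theorem with Tonelli in $(s,\lambda)$ to obtain $\frac12\|A^{1/2}g\|_{L^2}^2$. Your alternative via $\frac{d}{ds}\|A^{1/2}T_sg\|^2=-2\|AT_sg\|^2$ is also valid, and for the inequality it only needs $\|A^{1/2}T_Sg\|^2\ge0$, not that this quantity tends to zero.
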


\begin{proof}
Using the definition of
$D_{\vartheta,g}$ and the Cauchy--Schwarz inequality, we conclude that,
for any $x\in\mathsf X_\vartheta\setminus N_g$,
\begin{align*}
\frac{|D_{\vartheta,g}(x)|^2}{\vartheta(x)}=
\frac{1}{\vartheta(x)}
\left|
\int_0^{\vartheta(x)}H_g(s,x)\,d s
\right|^2\le
\int_0^{\vartheta(x)}|H_g(s,x)|^2\,d s.
\end{align*}
From $\mathfrak m(N_g)=0$ and Tonelli's theorem, we infer that
\begin{align*}
\int_{\mathsf X_\vartheta}
\frac{|D_{\vartheta,g}(x)|^2}{\vartheta(x)}
\,d\mathfrak m(x)
&\le
\int_{\mathsf X_\vartheta}
\int_0^{\vartheta(x)}
|H_g(s,x)|^2\,d s\,d\mathfrak m(x)\\
&\le
\int_0^\infty
\int_{\mathsf X}
|H_g(s,x)|^2
\,d\mathfrak m(x)\,d s\\
&=
\int_0^\infty
\|T_s(Ag)\|_{L^2(\mathsf X,\mathfrak m)}^2\,d s.
\end{align*}

Now, suppose that $A$ is non-negative and self-adjoint.
Let $E_A$ be the spectral resolution of $A$, and let
$\nu_g(B):=\langle E_A(B)g,g\rangle$
for every Borel set $B\subset[0,\infty)$. By the fact
that $A$ is non-negative and self-adjoint,
the spectral theorem (see, for instance,
\cite[Theorem~VIII.6]{ReedSimonI}), and Tonelli's theorem, we find that
\begin{align*}
\int_0^\infty\|T_s(Ag)\|_{L^2(\mathsf X,\mathfrak m)}^2\,d s
&=\int_0^\infty\langle A^2e^{-2sA}g,g\rangle\,d s\\
&= \int_{0}^\infty
\int_0^\infty
\lambda^2e^{-2s\lambda}\,d s
\,d\nu_g(\lambda)\\
&=
\frac12
\int_{0}^\infty
\lambda\,d\nu_g(\lambda)=
\frac12\left\|A^{\frac12}g\right\|_{L^2(\mathsf X,\mathfrak m)}^2.
\end{align*}
This completes the proof of Theorem \ref{thm:variable-time-graph-square}.
\end{proof}

\section{Brownian exit distance and finite-time capacitary exit}
\label{sec:brownian-exit}

In this section, we introduce the killed Brownian semigroup and the
Brownian exit distance, and establish the Hardy inequality associated
with it. We then derive a finite-time exit estimate from a
capacitary condition and use it to prove Theorem~\ref{thm:main}(i).

We first recall some notation and standard facts concerning Brownian
motion and the killed Brownian semigroup.
Let $(W_t)_{t\ge0}$ be the standard $n$-dimensional \emph{Brownian motion} and,
for any $x\in\mathbb R^n$ and $t\in[0,\infty)$, let
\begin{align*}
X_t^x:=x+\sqrt2\,W_t.
\end{align*}
When there is no ambiguity, we simply write $X_t$ instead of $X_t^x$.
We write $\mathbb P_x$ and $\mathbb E_x$, respectively, for probability and expectation when
the process starts from $x$. Thus, for any $t\in(0,\infty)$,
\begin{align}\label{1500}
\mathbb P_x(X_t\in\,d y)
=
(4\pi t)^{-n/2}
\exp\left(-\frac{|x-y|^2}{4t}\right)\,d y.
\end{align}
We denote by
$\{\mathcal F_t\}_{t\ge0}$ its completed right-continuous natural
\emph{filtration}.
Let $\mathcal W:=C([0,\infty),\mathbb R^n)$
equipped with the compact-open topology and its Borel sigma-algebra.
For any closed set $E\subset\mathbb R^n$, any open set $U\subset\mathbb R^n$, and
any $w\in\mathcal W$, we define
\begin{align*}
T_E(w):=\inf\{t\ge0:w(t)\in E\}
\quad\text{and}\quad
\tau_U(w):=\inf\{t\ge0:w(t)\notin U\},
\end{align*}
with $\inf\emptyset:=\infty$.  The maps $T_E,\tau_U:\mathcal W\to[0,\infty]$ are Borel measurable.
We regard $X=(X_t)_{t\ge0}$ as a $\mathcal W$-valued random element;
thus, $X$ denotes the whole path, whereas $X_t\in\mathbb R^n$ denotes its
position at time $t$.

\begin{definition}

For any open set $U\subset\mathbb R^n$ and any bounded Borel function
$h:U\to\mathbb R$, we define the \emph{killed Brownian semigroup} by
\begin{align*}
(P_t^Uh)(x)
:=
\mathbb E_x\!\left[
h(X_t)\mathbf 1_{\{t<\tau_U\}}
\right],
\qquad x\in U,\quad t\in[0,\infty).
\end{align*}
In particular,
\begin{align*}
P_t^U\mathbf 1(x)=\mathbb P_x(t<\tau_U),
\qquad
1-P_t^U\mathbf 1(x)=\mathbb P_x(\tau_U\le t).
\end{align*}
\end{definition}

Consider the \emph{closed symmetric form $\mathcal E_U$} on $L^2(U)$
with domain $H_0^1(U)$, defined by
\begin{align*}
\mathcal E_U(u,v)
:=
\int_U\nabla u(x)\cdot\nabla v(x)\,d x.
\end{align*}
Let $A_U$ be the non-negative self-adjoint operator
associated with $\mathcal E_U$, i.e.
$$\operatorname{Dom}(A_U)
:=
\left\{
u\in H_0^1(U):
\exists f\in L^2(U)\text{ s.t. }
\mathcal E_U(u,v)=\langle f,v\rangle_{L^2(U)}
\text{ for any }v\in H_0^1(U)
\right\}$$
and $A_Uu:=f$. By the representation theorem (see \cite[Theorem~1.3.1]{FukushimaOshimaTakeda}),
we find that
$\operatorname{Dom}(A_U^{\frac12})=H_0^1(U)$
and, for any $u\in H_0^1(U)$,
\begin{align}\label{810}
\left\|A_U^{\frac12}u\right\|_{L^2(U)}^2
=
\mathcal E_U(u,u)
=
\int_U|\nabla u(x)|^2\,d x.
\end{align}
Moreover, $C_c^\infty(U)\subset\operatorname{Dom}(A_U)$ and
$A_Ug=-\Delta g$ for every $g\in C_c^\infty(U)$.
The operators $\{P_t^U\}_{t\ge0}$ extend to a strongly continuous
symmetric sub-Markovian contraction semigroup on $L^2(U)$ and,
for any $t\in[0,\infty)$,
$ P_t^U=e^{-tA_U};$
see, for instance,
\cite[Section~2.5]{ChungZhao}.
For every bounded Borel function $h$ on $U$, the map
$(t,x)\longmapsto P_t^Uh(x)$
is Borel measurable on $[0,\infty)\times U$.  Moreover, for any
$g\in C_{\rm{c}}^\infty(U)$, we have the pointwise Dynkin formula:
\begin{align}\label{2156}
g(x)-P_t^Ug(x)
=
\int_0^tP_s^U(A_Ug)(x)\,d s,
\qquad x\in U,\quad t\ge0.
\end{align}

The preceding variable-time estimate naturally leads to a probabilistic
distance. Related probabilistic exit-time scales have previously been used in
spectral estimates; see, for example,
\cite{LuSteinerbergerDV}.

\begin{definition}
\label{def:brownian-exit-distance}
Let $\Omega\subset\mathbb R^n$ be an open set and $p\in(0,1)$. 
For $x\in\Omega$, define the \emph{$p$-exit distance} by
\begin{align*}
\rho_p(x)
:=
\inf\left\{
r\in(0,\infty):
\mathbb P_x(\tau_\Omega\le r^2)\ge p
\right\}.
\end{align*}
\end{definition}

\begin{lemma}
\label{lem:brownian-exit-distance-measurable}
For every $p\in(0,1)$, the function
$\rho_p:\Omega\to(0,\infty]$ is Borel measurable. Moreover, if $\rho_p(x)<\infty$, then
\begin{align*}
\mathbb P_x\left(\tau_\Omega\le\rho_p(x)^2\right)\ge p.
\end{align*}
\end{lemma}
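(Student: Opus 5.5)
The plan is to write $\rho_p$ through a monotone function of the time variable and then read off measurability from sublevel sets. For $x\in\Omega$ and $t\in[0,\infty)$, set
\begin{align*}
\psi(t,x):=\mathbb P_x(\tau_\Omega\le t)=1-P_t^\Omega\mathbf 1(x).
\end{align*}
By the facts recalled above, the map $(t,x)\mapsto P_t^\Omega\mathbf 1(x)$ is Borel measurable on $[0,\infty)\times\Omega$, so $\psi$ is jointly Borel. For fixed $x$, the function $t\mapsto\psi(t,x)$ is nondecreasing, since the events $\{\tau_\Omega\le t\}$ increase in $t$. It is also right-continuous: $\{\tau_\Omega\le t\}=\bigcap_{s>t}\{\tau_\Omega\le s\}$, so continuity from above of $\mathbb P_x$ applies. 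Note that this identity holds for any $[0,\infty]$-valued $\tau$; no path regularity is needed.

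The second assertion follows from right-continuity. By definition, $\rho_p(x)^2=\inf\{t>0:\psi(t,x)\ge p\}$ with $t=r^2$. If $\rho_p(x)<\infty$, pick admissible $r_k\downarrow\rho_p(x)$. When $\rho_p(x)>0$, right-continuity at $\rho_p(x)^2$ gives $\psi(\rho_p(x)^2,x)=\lim_k\psi(r_k^2,x)\ge p$. When $\rho_p(x)=0$, the same argument gives $\psi(0,x)\ge p$. But $\psi(0,x)=\mathbb P_x(X_0\notin\Omega)=0$ for $x\in\Omega$ (openness of $\Omega$ and continuity of paths give $\tau_\Omega>0$ a.s.), which contradicts $p>0$. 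Hence $\rho_p(x)>0$, which also justifies the codomain $(0,\infty]$ in the statement, and the inequality holds.

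For measurability, monotonicity and right-continuity combine to give, for every $s\in(0,\infty)$,
\begin{align*}
\{x\in\Omega:\rho_p(x)\le s\}=\{x\in\Omega:\psi(s^2,x)\ge p\}.
\end{align*}
The inclusion ``$\subset$'' uses the attainment just proved together with monotonicity in $t$. The inclusion ``$\supset$'' is immediate from the definition of $\rho_p$. The right-hand side is Borel, because $x\mapsto\psi(s^2,x)$ is a section of a jointly Borel function. Since the sets $\{\rho_p\le s\}$ for $s>0$ generate the Borel structure of $(0,\infty]$, the function $\rho_p$ is Borel. One could instead take a countable union over rational $r$ of $\{\psi(r^2,\cdot)\ge p\}$, but the closed-sublevel description avoids any projection argument and is cleaner than the approach used in Lemma \ref{lem:borel-projection-measurable2}.

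The only step needing care is that $x\mapsto\mathbb P_x(\tau_\Omega\le t)$ is Borel for each fixed $t$, and this is precisely the joint measurability of $P_t^U h$ already stated in the text for $h=\mathbf 1$. Should one prefer to avoid relying on it, it can be verified directly. Write $X^x=x+\sqrt2 W$; then $(x,w)\mapsto \tau_\Omega(x+\sqrt2 w)$ is Borel on $\mathbb R^n\times\mathcal W$, since $\tau_\Omega$ is Borel on $\mathcal W$ and translation is continuous. Fubini's theorem then gives that $x\mapsto\mathbb P(\tau_\Omega(x+\sqrt2W)\le t)$ is Borel.
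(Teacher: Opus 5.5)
Your proof is correct and uses the same ingredients as the paper's: monotonicity and right-continuity of $t\mapsto\mathbb P_x(\tau_\Omega\le t)$, Borel measurability of $x\mapsto P_t^\Omega\mathbf 1(x)$, and path continuity to get positivity. The differences are only in organization. The paper proves $\rho_p>0$ separately via the displacement bound and writes $\{\rho_p<a\}$ as the countable union over rational $r$, which is the alternative you mention. You instead prove attainment first, obtain positivity from right-continuity at $t=0$, and then use the closed sublevel sets $\{\rho_p\le s\}=\{\psi(s^2,\cdot)\ge p\}$.
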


\begin{proof}
We first claim that $\rho_p(x)>0$ for every $x\in\Omega$.
If $\Omega=\mathbb R^n$, then $\rho_p(x)=\infty$. Otherwise, by the continuity of Brownian paths,
we have
\begin{align*}
\mathbb P_x(\tau_\Omega\le r^2)
\le
\mathbb P_x\left(
\sup_{0\le t\le r^2}|X_t-x|\ge\operatorname{dist}(x,\Omega^\complement)
\right)\to0
\end{align*}
as $r\to0^+$, which further implies that
$\rho_p(x)>0$.
We claim that, for any $a\in(0,\infty)$,
\begin{align}\label{1430}
\left\{x\in\Omega:\rho_p(x)<a\right\}
=
\bigcup_{r\in(0,a)\cap\mathbb Q}
\left\{x\in\Omega:1-P_{r^2}^\Omega\mathbf 1(x)\ge p\right\},
\end{align}
where $\mathbb Q$ denotes the set of all rational numbers.
Indeed, if $\rho_p(x)<a$, then there exists
$r\in(\rho_p(x),a)\cap\mathbb Q$.
By the definition of
$\rho_p(x)$ and the monotonicity of
$s\mapsto 1-P_{s^2}^\Omega\mathbf 1(x)$, we have
$1-P_{r^2}^\Omega\mathbf 1(x)\ge p$. Conversely, if
$1-P_{r^2}^\Omega\mathbf 1(x)\ge p$ for some $r\in(0,a)\cap\mathbb Q$, then
$\rho_p(x)\le r<a$.
Then the above claim \eqref{1430} holds and
the Borel measurability follows from the Borel measurability of
$x\mapsto P_{r^2}^\Omega\mathbf 1(x)$. If $\rho_p(x)<\infty$, then
there exists a sequence $\{r_j\}_{j\in\mathbb N}$ such that
$r_j\to\rho_p(x)^+$ and, for any $j\in\mathbb N$,
$\mathbb P_x(\tau_\Omega\le r_j^2)\ge p$.
Since the distribution function
$t\mapsto\mathbb P_x(\tau_\Omega\le t)$ is right-continuous, it follows that
\begin{align*}
\mathbb P_x(\tau_\Omega\le\rho_p(x)^2)
=
\lim_{j\to\infty}\mathbb P_x(\tau_\Omega\le r_j^2)
\ge p,
\end{align*}
which completes the proof of Lemma \ref{lem:brownian-exit-distance-measurable}.
\end{proof}

\begin{theorem}
\label{thm:exit-distance-hardy}
Let $\Omega\subset\mathbb R^n$ be an open set and
$p\in(0,1)$. Then, for every $u\in C_{\rm{c}}^\infty(\Omega)$,
\begin{align*}
\int_\Omega\frac{|u(x)|^2}{\rho_p(x)^2}\,d x
\le
\frac{2}{p^2}\int_\Omega|\nabla u(x)|^2\,d x.
\end{align*}
\end{theorem}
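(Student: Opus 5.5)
We apply Theorem~\ref{thm:variable-time-graph-square} to the killed Brownian semigroup $\{P_t^\Omega\}_{t\ge0}=\{e^{-tA_\Omega}\}_{t\ge0}$ on $L^2(\Omega)$. We take $g:=|u|$ (or $g=u$; see below) and the variable time $\vartheta(x):=\rho_p(x)^2$, which is Borel measurable by Lemma~\ref{lem:brownian-exit-distance-measurable}. The weight then becomes $\vartheta(x)^{-1}=\rho_p(x)^{-2}$. Combined with \eqref{810}, the theorem yields
\begin{align*}
\int_{\{\rho_p<\infty\}}\frac{|g(x)-P^\Omega_{\rho_p(x)^2}g(x)|^2}{\rho_p(x)^2}\,dx
\le\frac12\int_\Omega|\nabla g(x)|^2\,dx .
\end{align*}
Points with $\rho_p(x)=\infty$ contribute nothing to the left-hand side of the target inequality. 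It therefore suffices to show the pointwise lower bound $|g(x)-P^\Omega_{\rho_p(x)^2}g(x)|\ge \tfrac p2\,|u(x)|$ or something of comparable strength. Obtaining exactly the constant $2/p^2$ requires the sharper bound $|D(x)|\ge p|u(x)|$, and getting that bound is the main obstacle.

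\textbf{Step 1.} We need the pointwise comparison between $g(x)$ and $P_t^\Omega g(x)$. For a general $g$ we have no pointwise control of $P_t^\Omega g(x)$ from $g(x)$. The natural remedy is to replace $g$ by a function for which $P_t g\le (1-p)g$ pointwise, such as $g\equiv$ const. Such a $g$ is not in $L^2$, however, so instead I would work with the localized quantity $\mathbb E_x[g(X_t)\mathbf 1_{\{t<\tau_\Omega\}}]$ and compare it against $\sup g\cdot\mathbb P_x(t<\tau_\Omega)\le(1-p)\sup g$. That comparison only helps at points where $g(x)$ is near its maximum.

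\textbf{Step 2.} To fix this, I would use a layer-cake reduction via Lemma~\ref{thm:isocap}. By Lemma~\ref{thm:isocap}(ii), it suffices to prove $\mu(K)\le \frac{p^{-2}}{2}\operatorname{cap}_\Omega(K)$ for every compact $K\Subset\Omega$, where $d\mu=\rho_p^{-2}dx$; this gives the constant $4\cdot\frac{1}{2p^2}=\frac{2}{p^2}$, exactly as claimed. Given $K$, take any $\varphi\in C_c^\infty(\Omega)$ with $\varphi\ge1$ on $K$. Let $\psi:=\min\{\varphi_+,1\}$, after a smoothing that makes it lie in $\operatorname{Dom}(A_\Omega)$. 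One can reduce to a smooth $\psi$ with $0\le\psi\le1$ and $\psi=1$ on $K$, at the cost of an arbitrarily small increase in energy, via the mollification arguments of Remark~\ref{rem:literal-mazya-capacity}. Since $0\le\psi\le1$, for $x\in K$ we get
\begin{align*}
P^\Omega_{\rho_p(x)^2}\psi(x)\le \mathbb P_x(\rho_p(x)^2<\tau_\Omega)\le 1-p,
\end{align*}
using Lemma~\ref{lem:brownian-exit-distance-measurable}. Hence $D_{\vartheta,\psi}(x)\ge p$ on $K\cap\{\rho_p<\infty\}$.

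\textbf{Step 3.} Theorem~\ref{thm:variable-time-graph-square} then gives
\begin{align*}
p^2\int_K\rho_p^{-2}\,dx\le\frac12\int_\Omega|\nabla\psi|^2\,dx .
\end{align*}
Taking the infimum over $\varphi$ yields $\mu(K)\le\frac{1}{2p^2}\operatorname{cap}_\Omega(K)$, and Lemma~\ref{thm:isocap}(ii) concludes the proof. Note that $\mu$ is locally finite, since $\rho_p\ge c\,\operatorname{dist}(x,\Omega^\complement)>0$ locally, as shown in the proof of Lemma~\ref{lem:brownian-exit-distance-measurable}. The delicate technical point is the truncation-and-smoothing in Step~2. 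It must produce $\psi\in\operatorname{Dom}(A_\Omega)$, for which $C_c^\infty$ suffices, with $\psi\le1$ everywhere and $\psi\ge1$ on $K$. I would achieve this by composing $\varphi$ with a smooth nondecreasing function $\Phi_\delta$ that equals $1$ on $[1-\delta,\infty)$, vanishes near $0$, and has Lipschitz constant at most $(1-\delta)^{-1}+\delta$. This yields energy at most $(1+O(\delta))\int|\nabla\varphi|^2$, and we then let $\delta\to0$.
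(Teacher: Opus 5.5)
Your Steps 2--3 are the paper's proof. You prove $\int_K\rho_p^{-2}\,dx\le\frac{1}{2p^2}\operatorname{cap}_\Omega(K)$ by applying Theorem~\ref{thm:variable-time-graph-square} with $\vartheta=\rho_p^2$ to a smooth truncation $0\le\psi\le1$, $\psi=1$ on $K$, of a near-optimal capacitary test function (the paper uses $\chi\circ\varphi$ with $|\chi'|\le1+\delta$), you get $\psi(x)-P^\Omega_{\rho_p(x)^2}\psi(x)\ge p$ on $K$ from Lemma~\ref{lem:brownian-exit-distance-measurable}, and you conclude with Lemma~\ref{thm:isocap}(ii). The one step you leave implicit is that $D_{\vartheta,\psi}(x)$ equals $\psi(x)-P^\Omega_{\vartheta(x)}\psi(x)$ pointwise even though $\vartheta$ varies with $x$; the paper secures this by choosing $H_\psi(s,x):=P^\Omega_s(A_\Omega\psi)(x)$ and using the pointwise Dynkin formula \eqref{2156}.
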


\begin{proof}
We first claim that, for every compact set $K\Subset\Omega$,
\begin{align}
\label{eq:exit-distance-capacity-bound}
\int_K\rho_p(x)^{-2}\,d x
\le
\frac{1}{2p^2}\operatorname{cap}_\Omega(K).
\end{align}
To prove this, fix $\varepsilon\in(0,\infty)$ and
choose $\delta\in(0,1)$ sufficiently small such that
\begin{align}\label{1434}
(1+\delta)^2\left[\operatorname{cap}_\Omega(K)+\delta\right]
\le \operatorname{cap}_\Omega(K)+\varepsilon.
\end{align}
By the definition of $\operatorname{cap}_\Omega(K)$, there exists
$\varphi\in C_{\rm{c}}^\infty(\Omega)$ such that $\varphi\ge1$ on $K$ and
\begin{align}\label{1435}
\int_\Omega|\nabla\varphi|^2\,d x
\le \operatorname{cap}_\Omega(K)+\delta.
\end{align}
Choose $\chi\in C^\infty(\mathbb R)$ such that
$0\le\chi\le1$, $\chi=0$ on $(-\infty,0]$, $\chi=1$ on $[1,\infty)$,
and $|\chi'|\le1+\delta$, and let $g:=\chi\circ\varphi$. Then
$g\in C_{\rm{c}}^\infty(\Omega)$, $g=1$ on $K$, and $0\le g\le1$.
From the chain rule, the bound $|\chi'|\le1+\delta$, \eqref{1435}, and \eqref{1434}, we deduce that
\begin{align}
\label{eq:exit-proof-g-energy}
\int_\Omega|\nabla g(x)|^2\,d x
\le (1+\delta)^2\int_\Omega|\nabla\varphi(x)|^2\,d x
\le (1+\delta)^2\left[\operatorname{cap}_\Omega(K)+\delta\right]
\le \operatorname{cap}_\Omega(K)+\varepsilon.
\end{align}
For any $x\in\Omega$, let
\begin{align*}
\vartheta_p(x):=
\begin{cases}
\rho_p(x)^2,&\rho_p(x)<\infty,\\
\infty,&\rho_p(x)=\infty.
\end{cases}
\end{align*}
By Lemma~\ref{lem:brownian-exit-distance-measurable}, we find that
$\vartheta_p$ is Borel measurable.  From $g=1$ on $K$, $0\le g\le1$,
the monotonicity of the killed Brownian semigroup, and
Lemma \ref{lem:brownian-exit-distance-measurable}, we deduce that, for
$x\in K$ with $\rho_p(x)<\infty$,
\begin{align}\label{2159}
g(x)-P_{\vartheta_p(x)}^\Omega g(x)
=1-P_{\vartheta_p(x)}^\Omega g(x)
\ge1-P_{\vartheta_p(x)}^\Omega\mathbf 1(x)
=\mathbb P_x(\tau_\Omega\le\rho_p(x)^2)\ge p.
\end{align}
We choose $H_g(s,x):=P_s^\Omega(A_\Omega g)(x)$ satisfying \eqref{2155}.
Using \eqref{2156}, we find that, for any $x\in\Omega$,
$D_{\vartheta_p,g}(x)
=
g(x)-P_{\vartheta_p(x)}^\Omega g(x)$.
By this, \eqref{2159}, the definition of $\vartheta_p$, Theorem~\ref{thm:variable-time-graph-square}, and \eqref{810},
we conclude that
\begin{align*}
p^2\int_K\rho_p(x)^{-2}\,d x
\le
\int_K
\frac{|g(x)-P_{\vartheta_p(x)}^\Omega g(x)|^2}{\vartheta_p(x)}
\,d x
\le\frac12\int_\Omega|\nabla g(x)|^2\,d x.
\end{align*}
Using \eqref{eq:exit-proof-g-energy} and letting
$\varepsilon\to0^+$, we find that
\eqref{eq:exit-distance-capacity-bound} holds.

Define the non-negative locally finite Borel measure
$\,d\mu_p(x):=\rho_p(x)^{-2}\,d x$.  From
\eqref{eq:exit-distance-capacity-bound}, we infer that, for every compact
$K\Subset\Omega$,
\begin{align*}
\mu_p(K)\le\frac{1}{2p^2}\operatorname{cap}_\Omega(K).
\end{align*}
Applying Lemma~\ref{thm:isocap}\textup{(ii)}, we conclude that, for every
$u\in C_{\rm{c}}^\infty(\Omega)$,
\begin{align*}
\int_\Omega\frac{|u(x)|^2}{\rho_p(x)^2}\,d x
\le
\frac{2}{p^2}\int_\Omega|\nabla u(x)|^2\,d x.
\end{align*}
This completes the proof of Theorem~\ref{thm:exit-distance-hardy}.
\end{proof}

Let $D:=B(c,R)$ and let $G_D$ denote the Dirichlet Green function
of $-\Delta$ in $D$. For $x\in D\setminus\{c\}$, let
\begin{align*}
x^*:=c+\frac{R^2(x-c)}{|x-c|^2}.
\end{align*}
Then, for $x,z\in D$ with $x\ne z$,
\begin{align*}
G_D(x,z)
=
\frac1{\kappa_n}
\left[
|z-x|^{2-n}
-
\left(\frac{R}{|x-c|}\right)^{n-2}
|z-x^*|^{2-n}
\right]
\end{align*}
with the usual limiting interpretation when $x=c$; in particular,
\begin{align}\label{1514}
G_D(c,z)
=
\frac1{\kappa_n}
\left(|z-c|^{2-n}-R^{2-n}\right);
\end{align}
see, for instance, \cite[Section~2.2.4]{EvansPDE}.

The following lemma is a consequence of classical Green
equilibrium theory; see, for instance,
\cite[Theorem~5.1, p.~190]{PortStone} and \cite[Chapter~6]{PortStone} and
see also \cite[Sections~2.4--2.5]{BetsakosBoudabraMarkowsky}.

\begin{theorem}
\label{thm:self-contained-ball-equilibrium}
Let $D:=B(c,R)$ be a ball, let $G_D$ denote the Dirichlet Green
function of $-\Delta$ in $D$, and let $E\Subset D$ be compact.  For
$y\in D$, we define
\begin{align}\label{hED}
h_E^D(y):=\mathbb P_y(T_E<\tau_D).
\end{align}
Then there exists a finite positive Radon measure $\mu_E^D$ supported
on $E$ such that
\begin{align}
\label{eq:compact-capacity-mass-energy}
\mu_E^D(E)=\operatorname{cap}_D(E).
\end{align}
Moreover,
\begin{align}
\label{eq:compact-green-representation}
h_E^D(y)
=
\int_EG_D(y,z)\,d\mu_E^D(z)
\end{align}
for almost every $y\in D$ and, in particular, for every
$y\in D\setminus E$.
\end{theorem}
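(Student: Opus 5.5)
The plan is to follow the classical Green-potential construction of the equilibrium measure and then identify the resulting potential with the hitting probability through a Dynkin/Itô argument for the process killed on leaving $D$.

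First, take the relative equilibrium potential. Minimize $\int_D|\nabla v|^2$ over the closed convex set
$\{v\in H_0^1(D): v\ge1 \text{ quasi-everywhere on } E\}$. The minimizer $e_E^D$ exists and is unique by strict convexity. Approximating by the smooth admissible functions used in the definition of $\operatorname{cap}_D(E)$, as in the closure argument of Remark~\ref{rem:literal-mazya-capacity}, its energy equals $\operatorname{cap}_D(E)$. The variational inequality $\mathcal E_D(e_E^D,\varphi)\ge0$ for all $\varphi\in C_{\rm c}^\infty(D)$ with $\varphi\ge0$ shows that $-\Delta e_E^D$ is a non-negative distribution, hence a positive Radon measure $\mu_E^D$. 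Outside $E$ one may perturb in both directions, so $e_E^D$ is harmonic in $D\setminus E$ and $\mu_E^D$ is supported on $E$. Since $e_E^D\in H_0^1(D)$ is superharmonic with Riesz measure $\mu_E^D$, the Riesz decomposition in the ball gives
$e_E^D=\int_E G_D(\cdot,z)\,d\mu_E^D(z)$ a.e. Testing against a cutoff $\psi\in C_{\rm c}^\infty(D)$ with $\psi=1$ near $E$, and using $e_E^D=1$ q.e. on $E$, yields
$\mu_E^D(E)=\int\psi\,d\mu_E^D=\mathcal E_D(e_E^D,\psi)=\mathcal E_D(e_E^D,e_E^D)=\operatorname{cap}_D(E)$, which is \eqref{eq:compact-capacity-mass-energy}.

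Second, identify the Green potential with $h_E^D$ on $D\setminus E$. Set $u:=\int_E G_D(\cdot,z)\,d\mu_E^D(z)$. On $D\setminus E$ this function is harmonic and continuous, and it tends to $0$ at $\partial D$ because $G_D(y,z)\to0$ uniformly for $z\in E$ as $y\to\partial D$, by \eqref{1514} and its off-center analogue. As for $E$, one has $u\le1$ everywhere, by the domination principle together with $u=1$ q.e. on $E$, and $u=1$ at all regular points of $E$. Now apply optional stopping to the bounded harmonic function $u$ along $X$ started at $y\in D\setminus E$, up to $T_E\wedge\tau_D$, with an exhaustion by compact sets. Irregular points are polar, so Brownian motion hits $E$ at regular points almost surely. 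This gives $u(y)=\mathbb E_y[u(X_{T_E\wedge\tau_D})]=\mathbb P_y(T_E<\tau_D)$. Off $E$ the set of exceptional points is empty, so the identity holds for every $y\in D\setminus E$. On $E$ itself, $h_E^D=1$ at regular points and $u=1$ q.e.; the irregular set is polar, hence Lebesgue null, so the two functions agree a.e. in $D$.

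The main obstacle is this boundary behavior at $E$: showing $u=1$ at regular points, $u\le1$, and that the process almost surely never first hits $E$ at an irregular point. I would not reprove these facts. Instead I would invoke Kellogg's theorem (the irregular points form a polar set) and the Port--Stone identification of the Green equilibrium potential with the hitting probability (\cite[Theorem~5.1, p.~190]{PortStone}), checking only that their normalization of $G_D$ agrees with the one given before the statement, so that the mass identity holds with $\operatorname{cap}_D$ exactly.
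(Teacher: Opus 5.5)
The paper does not prove this statement; it imports it from classical Green equilibrium theory (Port--Stone, Betsakos--Boudabra--Markowsky). Your proposal is a correct sketch of that theory along a partly different route.

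\textbf{What you do differently.} You construct $\mu_E^D$ analytically, as the Riesz measure $-\Delta e_E^D$ of the Dirichlet-principle minimizer in $H_0^1(D)$. Only afterwards do you identify $\int_E G_D(\cdot,z)\,d\mu_E^D(z)$ with $\mathbb P_y(T_E<\tau_D)$, by optional stopping in $D\setminus E$. That step uses the domination principle, fine continuity, the Kellogg--Evans theorem, and the fact that Brownian motion does not hit polar sets.

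\textbf{What each route buys.} In your route the mass identity $\mu_E^D(E)=\operatorname{cap}_D(E)$ comes out with exactly the paper's variational $\operatorname{cap}_D$ and the $-\Delta$ normalization of $G_D$. The bare citation instead leaves the reader to match the reference's conventions for Brownian motion, Green function and capacity with $X=x+\sqrt2\,W$ and the Dirichlet-energy capacity. In fact your identification step is normalization-free: the potential equals $e_E^D$, and $h_E^D$ is insensitive to time scaling. So the normalization check you mention at the end is unnecessary. The price is the fine-potential-theoretic input on the probabilistic side, which you, like the paper, take from the literature.

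\textbf{Three small points.}
First, equating the energy of your q.e.-constrained minimizer with $\operatorname{cap}_D(E)$ needs quasi-continuity (outer regularity of capacity). The closure argument of Remark~\ref{rem:literal-mazya-capacity} is not enough, since it only handles continuous functions that are $\ge1$ everywhere on the set.
Second, the optional-stopping limit at $X_{T_E}$ needs $u$ to be continuous at regular points of $E$. This follows from lower semicontinuity of $u$ together with $u\le1$ everywhere and $u=1$ at those points.
Third, the paper's $T_E$ is a first entrance time ($t\ge0$), so $h_E^D=1$ at every point of $E$, not only at regular ones. This only simplifies the a.e.\ statement.
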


\begin{lemma}
\label{lem:compact-eventual-hit}
Let $\Omega\subset\mathbb R^n$ be an open set, $x_0\in\Omega$,
$r\in(0,\infty)$, $\beta\in(0,1]$, and
$E\subset F\cap\overline{B(x_0,r)}$ be a compact set. Assume that
\begin{align*}
\operatorname{cap}(E)\ge\beta\operatorname{cap}(B(\mathbf 0,r))=\beta\kappa_nr^{n-2}.
\end{align*}
Then
\begin{align*}
\mathbb P_{x_0}(T_E<\tau_{B(x_0,2r)})\ge a_n\beta,
\end{align*}
here and thereafter, $a_n:=1-2^{2-n}$.
\end{lemma}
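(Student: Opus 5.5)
We apply Theorem~\ref{thm:self-contained-ball-equilibrium} with $D:=B(x_0,2r)$ and $c:=x_0$. The set $E\subset\overline{B(x_0,r)}$ is compact and $\overline{B(x_0,r)}\subset D$, so $E\Subset D$. Since $E\subset F$ and $x_0\in\Omega$, we have $x_0\notin E$. Hence the Green representation \eqref{eq:compact-green-representation} holds pointwise at $y=x_0$:
\begin{align*}
\mathbb P_{x_0}(T_E<\tau_D)=h_E^D(x_0)=\int_E G_D(x_0,z)\,d\mu_E^D(z).
\end{align*}

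The plan is to bound the kernel from below uniformly on $E$ and then use the total mass identity \eqref{eq:compact-capacity-mass-energy}. By \eqref{1514} with $R=2r$, for $z\in E$ we have $|z-x_0|\le r$, and therefore
\begin{align*}
G_D(x_0,z)=\frac1{\kappa_n}\left(|z-x_0|^{2-n}-(2r)^{2-n}\right)
\ge\frac1{\kappa_n}\left(r^{2-n}-2^{2-n}r^{2-n}\right)=\frac{a_n}{\kappa_n}r^{2-n},
\end{align*}
using that $t\mapsto t^{2-n}$ is decreasing for $n\ge3$. Integrating against $\mu_E^D$ gives
\begin{align*}
\mathbb P_{x_0}(T_E<\tau_D)\ge\frac{a_n}{\kappa_n}r^{2-n}\mu_E^D(E)
=\frac{a_n}{\kappa_n}r^{2-n}\operatorname{cap}_D(E).
\end{align*}

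Finally, Lemma~\ref{lem:capacity-package}(iv) gives $\operatorname{cap}_D(E)\ge\operatorname{cap}(E)$, and by hypothesis $\operatorname{cap}(E)\ge\beta\kappa_n r^{n-2}$. Substituting yields $\mathbb P_{x_0}(T_E<\tau_{B(x_0,2r)})\ge a_n\beta$.

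The only delicate point is that the representation \eqref{eq:compact-green-representation} is needed at the specific point $x_0$, not merely almost everywhere. This is exactly why we use $x_0\notin E$, which follows from $E\subset F=\Omega^\complement$ and $x_0\in\Omega$. Everything else is the explicit ball Green function together with the comparison of relative and absolute capacity.
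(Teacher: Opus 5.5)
Your proposal is correct and follows the paper's proof essentially step for step. Both take $D=B(x_0,2r)$ and use $x_0\notin E$ to evaluate the Green representation exactly at $x_0$. Both then bound $G_D(x_0,z)\ge \frac{a_n}{\kappa_n}r^{2-n}$ on $E$ via the explicit ball Green function, use the total mass identity $\mu_E^D(E)=\operatorname{cap}_D(E)$, and conclude with $\operatorname{cap}_D(E)\ge\operatorname{cap}(E)$.
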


\begin{proof}
Using $x_0\in\Omega$ and $E\subset F$, we obtain $x_0\notin E$.
Let $D:=B(x_0,2r)$ and $h^D_E$ be the same as in \eqref{hED}. From \eqref{1514}, it follows that,
for any
$z\in\overline{B(x_0,r)}\setminus\{x_0\}$,
\begin{align}
\label{eq:green-lower}
G_D(x_0,z)
\ge
\frac{a_n}{\kappa_n}r^{2-n}.
\end{align}
Let $\mu_E^D$ be the same as in
Theorem~\ref{thm:self-contained-ball-equilibrium}.  Using
\eqref{eq:compact-green-representation},
\eqref{eq:green-lower},
and \eqref{eq:compact-capacity-mass-energy},
we find that
\begin{align}\label{1713}
h_E^D(x_0)
=\int_EG_D(x_0,z)\,d \mu_E^D(z)
\ge\frac{a_n}{\kappa_n}r^{2-n}\mu_E^D(E)
=\frac{a_n}{\kappa_n}r^{2-n}\operatorname{cap}_D(E),
\end{align}
which, together with Lemma~\ref{lem:capacity-package}(iv),
further implies that
\begin{align*}
h_E^D(x_0)
\ge\frac{a_n}{\kappa_n}r^{2-n}\operatorname{cap}(E)
\ge a_n\beta.
\end{align*}
This completes the proof of Lemma~\ref{lem:compact-eventual-hit}.
\end{proof}

\begin{lemma}
\label{lem:late-hit}
Let $x_0\in\mathbb R^n$, $r\in(0,\infty)$,
$E\subset\overline{B(x_0,r)}$ be compact with $x_0\notin E$,
$D:=B(x_0,2r)$, and $h^D_E$ be the same as in \eqref{hED}.
Then, for any $T\in(0,\infty)$,
\begin{align}
\label{eq:late-hit-bound}
\mathbb P_{x_0}\left(T_E\in\left(Tr^2,\tau_D\right)\right)
\le\frac{2\kappa_n}{na_n}(4\pi T)^{-\frac n2}h^D_E(x_0).
\end{align}
In particular, if $T_n:=\frac1{4\pi}
(\frac{4\kappa_n}{na_n})^{\frac2n},$
then
\begin{align*}
\mathbb P_{x_0}\left(T_E\in\left(T_nr^2,\tau_D\right)\right)\le\frac12h^D_E(x_0).
\end{align*}
\end{lemma}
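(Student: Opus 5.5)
Apply the strong Markov property at $T_E$ on the event $\{T_E\in(Tr^2,\tau_D)\}$, and then represent the relevant quantity through the Green function and the equilibrium measure of Theorem~\ref{thm:self-contained-ball-equilibrium}. A cleaner route is the last-exit (Chung) decomposition. For $y\in D\setminus E$, the distribution of the hitting position satisfies
\begin{align*}
\mathbb P_{y}(T_E<\tau_D,\ X_{T_E}\in dz)\quad\text{vs.}\quad h_E^D(y)=\int_E G_D(y,z)\,d\mu_E^D(z),
\end{align*}
and there is a time-resolved version. With $p^D_s(y,z)$ the Dirichlet heat kernel of $D$ for the generator $\Delta$ (so $G_D=\int_0^\infty p_s^D\,ds$), one has
\begin{align*}
\mathbb P_{x_0}(T_E\in(t,\tau_D))=\mathbb E_{x_0}\left[h^D_E(X_t)\mathbf 1_{\{t<\tau_D\wedge T_E\}}\right]
\le \int_D p_t^D(x_0,y)\,h^D_E(y)\,dy .
\end{align*}
Here the equality is the Markov property at the fixed time $t=Tr^2$: on $\{t<T_E\wedge\tau_D\}$ the event that the path hits $E$ before leaving $D$ after time $t$ has conditional probability $h_E^D(X_t)$. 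The resulting bound is what I will estimate, using that $h^D_E$ is given by the Green representation for a.e.\ $y$, so the integral is unaffected by the exceptional null set.

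Next insert the representation \eqref{eq:compact-green-representation} and use Tonelli:
\begin{align*}
\int_D p_t^D(x_0,y)h^D_E(y)\,dy=\int_E\int_D p^D_t(x_0,y)G_D(y,z)\,dy\,d\mu^D_E(z)=\int_E\int_t^\infty p^D_s(x_0,z)\,ds\,d\mu_E^D(z),
\end{align*}
by the semigroup property. Then bound $p_s^D(x_0,z)\le(4\pi s)^{-n/2}$, which follows from domination by the free kernel \eqref{1500}. This gives $\int_t^\infty p_s^D\,ds$, but integrating to infinity yields $\frac{2}{n-2}(4\pi)^{-n/2}t^{1-n/2}$, which has the wrong power and constant. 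I will therefore keep the Gaussian factor instead: for $z\in E\subset\overline{B(x_0,r)}$ I still need a bound in terms of $T^{-n/2}$ times $\mu_E^D(E)r^{2-n}$. Since the target is $\frac{2\kappa_n}{na_n}(4\pi T)^{-n/2}h^D_E(x_0)$ and $h^D_E(x_0)\ge\frac{a_n}{\kappa_n}r^{2-n}\mu^D_E(E)$ by \eqref{1713}, it suffices to show
\begin{align*}
\int_{Tr^2}^\infty p^D_s(x_0,z)\,ds\le \frac{2}{n}(4\pi T)^{-n/2}r^{2-n}\quad(z\in E).
\end{align*}
To obtain the correct factor I will use that $D=B(x_0,2r)$ is bounded. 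Write $\int_t^\infty p_s^D(x_0,z)\,ds=\int_D p^D_t(x_0,y)G_D(y,z)\,dy\le (4\pi t)^{-n/2}\int_D G_D(y,z)\,dy$. Here $\int_D G_D(y,z)\,dy=\mathbb E_z\tau_D=\frac{(2r)^2-|z-x_0|^2}{2n}\le\frac{2r^2}{n}$ for the generator $\Delta$ (consistent with $X_t=x+\sqrt2W_t$). Hence $\int_t^\infty p^D_s\,ds\le\frac{2r^2}{n}(4\pi Tr^2)^{-n/2}=\frac2n(4\pi T)^{-n/2}r^{2-n}$, which is exactly the required bound. Combining this with $\mu_E^D(E)\le\frac{\kappa_n}{a_n}r^{n-2}h_E^D(x_0)$ proves \eqref{eq:late-hit-bound}. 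The particular case follows by substituting $T=T_n$, since then $\frac{2\kappa_n}{na_n}(4\pi T_n)^{-n/2}=\frac12$.

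The main obstacle is justifying the identities rigorously. The Markov-property step at time $t$ must use $h^D_E$ evaluated at $X_t$, which is fine because $X_t$ has a density and so a.e.\ equality suffices. The identification $G_D=\int_0^\infty p^D_s\,ds$, with the normalization matching $\kappa_n$ and the generator $\Delta$ of $X$, is standard, as is the formula for $\mathbb E_z\tau_D$; I will cite these classical facts.
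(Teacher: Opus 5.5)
Your argument is correct and is essentially the paper's proof. The shared steps are the Markov property at the fixed time $t=Tr^2$, the bound $(4\pi t)^{-n/2}$ on the transition density, Tonelli with the Green representation of $h_E^D$, the estimate $\int_D G_D(y,z)\,dy\le \frac{2r^2}{n}$, and the lower bound \eqref{1713} to convert $\mu_E^D(E)$ into $h_E^D(x_0)$. The only difference is cosmetic: you pass through the Dirichlet heat kernel and the identity $\int_D p_t^D(x_0,y)G_D(y,z)\,dy=\int_t^\infty p_s^D(x_0,z)\,ds$, and you then immediately undo that identity. The paper instead simply replaces $\mathbf 1_{\{t<\tau_D\}}$ by $\mathbf 1_{\{X_t\in D\}}$ and uses the free Gaussian density, and your opening remarks about the strong Markov property at $T_E$ and last-exit decompositions are never used.
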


\begin{proof}
Fix $T\in(0,\infty)$ and let $t:=Tr^2$.
Let $A_t:=\{t<T_E\wedge\tau_D\}.$
Then $A_t\in\mathcal F_t$.  On $A_t$, the Brownian path has reached
neither $E$ nor $\mathbb R^n\setminus D$ by time $t$, and hence
$X_t\in D\setminus E$.  By the Markov property of Brownian motion at
time $t$ (see, for instance,
\cite[Theorem~6.15]{KaratzasShreve}), we find that
\begin{align*}
\mathbb E_{x_0}\left[
\mathbf 1_{\{t<T_E<\tau_D\}}\mid\mathcal F_t
\right]=\mathbb P_{x_0}
\left(t<T_E<\tau_D\mid\mathcal F_t\right)
=
\mathbf 1_{A_t}h_E^D(X_t)
\end{align*}
almost surely.
Taking expectations and using the tower property of conditional expectation,
we conclude that
\begin{align*}
\mathbb P_{x_0}(t<T_E<\tau_D)
=
\mathbb E_{x_0}\left[
\mathbf 1_{\{t<T_E\wedge\tau_D\}}h_E^D(X_t)
\right].
\end{align*}
Since $h_E^D\ge0$ and
$\{t<T_E\wedge\tau_D\}\subset\{t<\tau_D\}$, it follows that
\begin{align}\label{1716}
\mathbb P_{x_0}(T_E\in(t,\tau_D))
\le
\mathbb E_{x_0}\!\left[
\mathbf 1_{\{t<\tau_D\}}h_E^D(X_t)
\right]=P_t^Dh_E^D(x_0).
\end{align}
Using \eqref{1500} and
$\{t<\tau_D\}\subset\{X_t\in D\}$, we obtain
\begin{align}\label{1717}
P_t^Dh_E^D(x_0)
&=
\mathbb E_{x_0}\!\left[
h_E^D(X_t)\mathbf 1_{\{t<\tau_D\}}
\right]\le
\mathbb E_{x_0}\!\left[
h_E^D(X_t)\mathbf 1_{\{X_t\in D\}}
\right]\nonumber\\
&=
\int_D
(4\pi t)^{-\frac n2}
\exp\!\left(-\frac{|x_0-y|^2}{4t}\right)
h_E^D(y)\,d y\nonumber\\
&\le
(4\pi t)^{-\frac n2}
\int_Dh_E^D(y)\,d y.
\end{align}
Let $\mu_E^D$ be the same as in
Theorem~\ref{thm:self-contained-ball-equilibrium}. From \eqref{eq:compact-green-representation}
and Tonelli's theorem, we infer that
\begin{align}\label{1707}
\int_Dh_E^D(y)\,d y
&=
\int_E
\left[
\int_DG_D(y,z)\,d y
\right]
\,d\mu_E^D(z).
\end{align}
By the definition of $G_D$ and $D=B(x_0,2r)$, we find that, for any $z\in D$,
\begin{align*}
\int_DG_D(y,z)\,d y
=
\frac{4r^2-|z-x_0|^2}{2n}
\le
\frac{2r^2}{n}.
\end{align*}
Combining this, \eqref{1707}, and
\eqref{eq:compact-capacity-mass-energy},
we obtain
\begin{align*}
\int_Dh_E^D(y)\,d y
&\le
\frac{2r^2}{n}\mu_E^D(E)
=
\frac{2r^2}{n}\operatorname{cap}_D(E),
\end{align*}
which, together with \eqref{1713}, further implies that
\begin{align*}
\int_Dh_E^D(y)\,d y
\le
\frac{2\kappa_n}{na_n}r^nh_E^D(x_0).
\end{align*}
From this, \eqref{1716}, \eqref{1717}, and $t=Tr^2$, we infer that
\begin{align*}
\mathbb P_{x_0}\left(T_E\in\left(Tr^2,\tau_D\right)\right)
\le
\frac{2\kappa_n}{na_n}
(4\pi T)^{-\frac n2}
h_E^D(x_0),
\end{align*}
which is \eqref{eq:late-hit-bound}.
This completes the proof of Lemma~\ref{lem:late-hit}.
\end{proof}

\begin{theorem}
\label{thm:compact-killing}
Let all the notation be the same as in Lemmas~\ref{lem:compact-eventual-hit}
and \ref{lem:late-hit}.
Then
\begin{align}
\label{eq:compact-killing}
\mathbb P_{x_0}\left(\tau_\Omega\le T_nr^2\right)
\ge\frac{a_n\beta}{2}.
\end{align}
Equivalently,
\begin{align}
\label{eq:compact-survival}
P_{T_nr^2}^\Omega\mathbf1(x_0)
\le1-\frac{a_n\beta}{2}.
\end{align}
\end{theorem}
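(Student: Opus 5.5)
The plan is to combine Lemma~\ref{lem:compact-eventual-hit} and Lemma~\ref{lem:late-hit} through a decomposition of the event $\{T_E<\tau_D\}$ with $D:=B(x_0,2r)$, according to whether $T_E\le T_nr^2$ or $T_E>T_nr^2$. Since $E$ is compact with $E\subset F=\Omega^\complement$, every continuous path that reaches $E$ at time $T_E$ has left $\Omega$ at that time, so $\tau_\Omega\le T_E$ pathwise. Hence
\begin{align*}
\{T_E<\tau_D,\ T_E\le T_nr^2\}\subset\{\tau_\Omega\le T_nr^2\},
\end{align*}
and it suffices to bound the probability of the left-hand event from below.

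Next, write
\begin{align*}
h_E^D(x_0)=\mathbb P_{x_0}(T_E<\tau_D)
=\mathbb P_{x_0}\left(T_E<\tau_D,\ T_E\le T_nr^2\right)
+\mathbb P_{x_0}\left(T_E\in\left(T_nr^2,\tau_D\right)\right).
\end{align*}
The second term is at most $\frac12h_E^D(x_0)$ by the ``in particular'' part of Lemma~\ref{lem:late-hit}. This lemma applies because $x_0\notin E$, as $x_0\in\Omega$ and $E\subset F$, and because $E\subset\overline{B(x_0,r)}$. Therefore
\begin{align*}
\mathbb P_{x_0}\left(T_E<\tau_D,\ T_E\le T_nr^2\right)\ge\frac12h_E^D(x_0).
\end{align*}
Lemma~\ref{lem:compact-eventual-hit} gives $h_E^D(x_0)\ge a_n\beta$, and combining these yields \eqref{eq:compact-killing}.

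The equivalence with \eqref{eq:compact-survival} follows directly from the identity $1-P_t^\Omega\mathbf 1(x_0)=\mathbb P_{x_0}(\tau_\Omega\le t)$ with $t:=T_nr^2$.

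The only delicate point is the pathwise inclusion $\tau_\Omega\le T_E$. This holds because the set $\{t:X_t\in E\}$ is closed for continuous paths, so $X_{T_E}\in E\subset\Omega^\complement$ whenever $T_E<\infty$. Measurability of the events involved is already guaranteed by the Borel measurability of $T_E$ and $\tau_U$ noted earlier.
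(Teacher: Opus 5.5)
Your proposal is correct and follows essentially the same route as the paper: with $D=B(x_0,2r)$, the paper writes $\mathbb P_{x_0}(T_E\le T_nr^2)\ge \mathbb P_{x_0}(T_E<\tau_D)-\mathbb P_{x_0}(T_E\in(T_nr^2,\tau_D))\ge\frac12 h_E^D(x_0)\ge\frac{a_n\beta}{2}$, using Lemmas~\ref{lem:late-hit} and~\ref{lem:compact-eventual-hit}, and then passes to $\tau_\Omega$ via $E\subset F$, which is your disjoint decomposition. The point you flag as delicate is simpler than you make it: $\tau_\Omega\le T_E$ already follows from the inclusion $\{t: X_t\in E\}\subset\{t: X_t\notin\Omega\}$, with no need for closedness of $E$ or continuity of paths.
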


\begin{proof}
Let $D:=B(x_0,2r)$.  By
Lemmas~\ref{lem:compact-eventual-hit} and~\ref{lem:late-hit}, we find that
\begin{align*}
\mathbb P_{x_0}\left(T_E\le T_nr^2\right)
&\ge \mathbb P_{x_0}\left(T_E<\tau_D\right)
-\mathbb P_{x_0}\left(T_E\in\left(T_nr^2,\tau_D\right)\right)\\
&\ge\frac12\mathbb P_{x_0}(T_E<\tau_D)
\ge\frac{a_n\beta}{2}.
\end{align*}
Since $E\subset F:=\mathbb R^n\setminus\Omega$, hitting $E$ implies that
the path has exited $\Omega$.  This proves \eqref{eq:compact-killing}, and
\eqref{eq:compact-survival} follows from
$P_t^\Omega\mathbf1(x_0)=\mathbb P_{x_0}(t<\tau_\Omega)$,
which completes the proof of Theorem~\ref{thm:compact-killing}.
\end{proof}

Now, we prove Theorem~\ref{thm:main}(i).
\begin{proof}[Proof of Theorem~\ref{thm:main}(i)]
We first assume that $\alpha\in(0,1]$ and fix $q\in(1,\infty)$.
Let $p:=\frac{a_n\alpha}2$.
Fix $x\in\Omega$ such that $d_\alpha(x)<\infty$.
By the definition of $d_\alpha(x)$, we find that there exists
$r\in[d_\alpha(x),qd_\alpha(x))$ such that
\begin{align*}
\operatorname{cap}\left(\overline{F\cap B(x,r)}\right)
\ge\alpha\operatorname{cap}\left(B(\mathbf0,r)\right)
=\alpha\kappa_nr^{n-2}.
\end{align*}
From this, the monotonicity of the killed Brownian semigroup, and
Theorem~\ref{thm:compact-killing}, we infer that
\begin{align*}
P_{T_nq^2d_\alpha(x)^2}^\Omega\mathbf1(x)
\le
P_{T_nr^2}^\Omega\mathbf1(x)
\le1-\frac{a_n\alpha}{2},
\end{align*}
which, together with Definition~\ref{def:brownian-exit-distance},
further implies that $\rho_p(x)\le\sqrt{T_n}\,q\,d_\alpha(x)$.
Using this and Theorem~\ref{thm:exit-distance-hardy}, we conclude that
\begin{align*}
\int_\Omega\frac{|u(x)|^2}{d_\alpha(x)^2}\,d x
\le T_nq^2\int_\Omega\frac{|u(x)|^2}{\rho_p(x)^2}\,d x
\le\frac{2T_nq^2}{p^2}\int_\Omega|\nabla u(x)|^2\,d x
=\frac{8T_nq^2}{a_n^2\alpha^2}
\int_\Omega|\nabla u(x)|^2\,d x.
\end{align*}
Letting $q\to1^+$, we obtain
\begin{align*}
\int_\Omega\frac{|u(x)|^2}{d_\alpha(x)^2}\,d x
\le\frac{8T_n}{a_n^2\alpha^2}
\int_\Omega|\nabla u(x)|^2\,d x,
\end{align*}
which proves \eqref{eq:main-hardy} for $\alpha\in(0,1]$.

Finally, by Lemma~\ref{lem:dalpha-elementary-properties}\textup{(iv)},
we find that,
if $\alpha>1$, then $d_\alpha\equiv\infty$, and hence
\eqref{eq:main-hardy} is trivial.
This completes the proof of Theorem~\ref{thm:main}(i).
\end{proof}

\section{Sharpness of the dependence on $\alpha$}
\label{sec:alpha-sharpness}

In this section, we prove Theorem \ref{thm:main}(ii).
We first record the elementary capacity estimate used in the construction.

\begin{lemma}
\label{lem:codim-two-cylinder-capacity}
For $R\in[2,\infty)$, let
\begin{align*}
\mathcal C_R
:=[-R,R]^{n-2}\times\overline{B_{\mathbb R^2}(\mathbf 0,1)}
\subset\mathbb R^{n-2}\times\mathbb R^2=\mathbb R^n.
\end{align*}
Then there exists a positive constant $c_n$ such that
$\operatorname{cap}(\mathcal C_R)
\ge \frac{c_nR^{n-2}}{\log R}.$
\end{lemma}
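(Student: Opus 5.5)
We will prove the lower bound by duality, which is the standard way to bound a capacity from below: if $\mu$ is a positive measure on $\mathcal C_R$, then for any admissible $\varphi\in C_{\rm c}^\infty(\mathbb R^n)$ with $\varphi\ge1$ on $\mathcal C_R$ we have
\begin{align*}
\mu(\mathcal C_R)^2\le\left(\int\varphi\,d\mu\right)^2
\le \mathcal I(\mu)\int_{\mathbb R^n}|\nabla\varphi|^2\,dx,
\qquad
\mathcal I(\mu):=\frac1{\kappa_n}\iint|x-y|^{2-n}\,d\mu(x)\,d\mu(y).
\end{align*}
Here we use $\varphi=\Gamma*(-\Delta\varphi)$ with $\Gamma(x)=\kappa_n^{-1}|x|^{2-n}$ the Newtonian kernel, together with Cauchy--Schwarz in the energy inner product. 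Concretely, $\int\varphi\,d\mu=\int\nabla\varphi\cdot\nabla(\Gamma*\mu)\,dx$ and $\|\nabla(\Gamma*\mu)\|_{L^2}^2=\mathcal I(\mu)$. One may mollify $\mu$ to avoid regularity issues. This gives $\operatorname{cap}(\mathcal C_R)\ge\mu(\mathcal C_R)^2/\mathcal I(\mu)$.

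We take $\mu$ to be Lebesgue measure restricted to $\mathcal C_R$, so that $\mu(\mathcal C_R)=\pi(2R)^{n-2}$. The key step is to bound, uniformly in $x\in\mathcal C_R$, the potential
\begin{align*}
U(x):=\int_{\mathcal C_R}|x-y|^{2-n}\,dy\lesssim_n\log R .
\end{align*}
Write $y=(y',y'')\in\mathbb R^{n-2}\times\mathbb R^2$. For the region $|y'-x'|\le 2$, the kernel is locally integrable in $\mathbb R^n$ and the region is bounded, so this part contributes $O_n(1)$. For the region $|y'-x'|>2$, we have $|x''-y''|\le2$, so $|x-y|\sim|x'-y'|$. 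Integrating over $y''$ gives a factor at most $\pi$, and the remaining integral is
\begin{align*}
\int_{2<|z'|\le 2\sqrt{n}R}|z'|^{2-n}\,dz'\sim_n\int_2^{2\sqrt nR}s^{2-n}s^{n-3}\,ds\sim\log R,
\end{align*}
since the $y'$-variable ranges over a set of diameter at most $2\sqrt{n}R$ and $R\ge2$. This one-dimensional logarithmic integral, which reflects the codimension-two geometry, is the only real computation in the proof.

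Hence $\mathcal I(\mu)\le\kappa_n^{-1}\mu(\mathcal C_R)\sup U\lesssim R^{n-2}\log R$, and therefore
\begin{align*}
\operatorname{cap}(\mathcal C_R)\ge\frac{\mu(\mathcal C_R)^2}{\mathcal I(\mu)}\gtrsim\frac{R^{2(n-2)}}{R^{n-2}\log R}=\frac{R^{n-2}}{\log R}.
\end{align*}

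The main point requiring care is the duality inequality $\operatorname{cap}(K)\ge\mu(K)^2/\mathcal I(\mu)$ under the paper's definition via $C_{\rm c}^\infty$ test functions. Since $\mu$ here has a bounded density, we have $\Gamma*\mu\in C^1$ with $\nabla(\Gamma*\mu)\in L^2$. The identity $\int\varphi\,d\mu=\int\nabla\varphi\cdot\nabla(\Gamma*\mu)\,dx$ then follows by integration by parts, because $-\Delta(\Gamma*\mu)=\mu$ in the sense of distributions. With this identity in hand, the lower bound follows directly from Cauchy--Schwarz.
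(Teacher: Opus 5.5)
Your proposal is correct and follows essentially the same route as the paper: you test the energy (dual) characterization of capacity with Lebesgue measure on $\mathcal C_R$, and you show its Newtonian energy is $\lesssim R^{n-2}\log R$ via the logarithmic integral over the $\mathbb R^{n-2}$ variables. The differences are only cosmetic: you prove the needed inequality $\operatorname{cap}(K)\ge\mu(K)^2/\mathcal I(\mu)$ directly rather than citing the two-sided energy characterization, and you bound the potential pointwise instead of integrating in the difference variables $a=z-z'$, $b=y-y'$.
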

\begin{proof}
Using the energy characterization of Newtonian capacity
(see, for instance, \cite[Section~2.2]{FleschlerTolsaVilla}),
we find that, for any compact set $E\subset\mathbb R^n$,
\begin{align}\label{1003}
\operatorname{cap}(E)\sim\left[
\inf_{\mu}
\iint_{E\times E}
|X-Y|^{2-n}\,d\mu(X)\,d\mu(Y)
\right]^{-1},
\end{align}
where the infimum is taken over all probability measures $\mu$ supported
on $E$.
Let $\mu_R$ be the normalized Lebesgue measure on $\mathcal C_R$, i.e.,
$\,d\mu_R(X):=2^{2-n}\pi^{-1} R^{2-n}\,d X$
on $\mathcal C_R.$
Write $X=(z,y)$ and $Y=(z',y')$, and let
$a:=z-z'$ and $b:=y-y'$.  Since $|a|\le C_nR$, $|b|\le2$, and
$\mu_R$ is the normalized Lebesgue measure on $\mathcal C_R$, it follows that
\begin{align*}
\iint_{\mathcal C_R\times\mathcal C_R}
|X-Y|^{2-n}\,d\mu_R(X)\,d\mu_R(Y)
\le
C_nR^{-(n-2)}
\int_{|b|\le2}\int_{|a|\le C_nR}
(|a|^2+|b|^2)^{-\frac{n-2}{2}}\,d a\,d b.
\end{align*}
From polar coordinates in $\mathbb R^{n-2}$,
we deduce that, for $s\in(0,2]$,
\begin{align*}
\int_{|a|\le C_nR}
(|a|^2+s^2)^{-\frac{n-2}{2}}\,d a
&\lesssim1+\log\frac{R}{s},
\end{align*}
which further implies that
\begin{align*}
\iint_{\mathcal C_R\times\mathcal C_R}
|X-Y|^{2-n}\,d\mu_R(X)\,d\mu_R(Y)
&\lesssim
R^{-(n-2)}
\int_0^2 s\left(1+\log\frac{R}{s}\right)\,d s
\lesssim R^{-(n-2)}\log R.
\end{align*}
Combining this and \eqref{1003}, we complete the proof of
Lemma \ref{lem:codim-two-cylinder-capacity}.
\end{proof}

Now, we construct an example to prove 
Theorem~\ref{thm:main}(ii), which shows the sharpness of $\alpha^{-2}$.

\begin{proof}[Proof of Theorem~\ref{thm:main}(ii)]
The lower bound follows from the item (i).
Next, we show the upper bound. Let $A_n\in(2\sqrt{n+2},\infty)$. By
Lemmas~\ref{1709} and \ref{lem:codim-two-cylinder-capacity}, we conclude that
there exists $b_n\in(0,\infty)$ such that, for every $R\in[2,\infty)$,
\begin{align}
\label{eq:cylinder-capacity-ratio}
\frac{\operatorname{cap}(\mathcal C_R)}{\operatorname{cap}(B(\mathbf 0,A_nR))}
\ge \frac{b_n}{\log R}.
\end{align}
Let
\begin{align*}
\gamma_n\in\left(0,\min\left\{\frac{b_n}{2},1\right\}\right).
\end{align*}

We first consider
$\alpha\in(0,\frac{\gamma_n}{4}]$.
For simplicity, let
$L:=\frac{\gamma_n}{\alpha}$ and
$H:=e^{3L}$.
Then $L\in[4,\infty)$.
We define
\begin{align*}
\Omega_\alpha
:=(-2H,2H)^{n-2}\times
\left\{y\in\mathbb R^2:1<|y|<e^{2L}\right\}
\end{align*}
and hence
$\mathbb R^{n-2}\times\overline{B_{\mathbb R^2}(\mathbf 0,1)}
\subset \Omega_\alpha^\complement=:F_\alpha$.
Clearly, $\Omega_\alpha$ is bounded, open, and connected.
Consider the compact set
\begin{align*}
K_\alpha
:=[-H,H]^{n-2}\times
\left\{y\in\mathbb R^2:e^{\frac{3L}{4}}\le|y|\le e^L\right\}
\Subset\Omega_\alpha.
\end{align*}
Fix $x=(z,y)\in K_\alpha$ and let
\begin{align*}
E_x
:=
\left\{\left(z',y'\right):\ \left|z'_i-z_i\right|\le|y|\ \text{for }i\in\{1,\ldots,n-2\},\
\left|y'\right|\le1\right\}.
\end{align*}
Then $E_x\subset F_\alpha$, and, since $|y|\in[2,\infty)$,
it follows that
$$\left|(z',y')-(z,y)\right|^2
\le (n-2)|y|^2+\left(|y|+1\right)^2
\le(n+2)|y|^2,$$ which further implies that
$E_x\subset F_\alpha\cap B(x,A_n|y|).$
From the fact that the set $E_x$ is a translate of
$\mathcal C_{|y|}$,
$\log|y|\le L$, \eqref{eq:cylinder-capacity-ratio}, and
$L=\frac{\gamma_n}\alpha$, we infer that
\begin{align*}
\operatorname{cap}\left(\overline{F_\alpha\cap B(x,A_n|y|)}\right)
\ge\operatorname{cap}(E_x)\ge\frac{b_n}{L}\operatorname{cap}(B(\mathbf 0,A_n|y|))
\ge\alpha\operatorname{cap}(B(\mathbf 0,A_n|y|)),
\end{align*}
and hence, for any $(z,y)\in K_\alpha$,
\begin{align}
\label{eq:dalpha-upper-sharp}
d_\alpha(z,y)\le A_n|y|.
\end{align}

Next, we construct the test function.  Choose
$\eta\in C_{\rm{c}}^\infty((-2,2)^{n-2})$ satisfying $0\le\eta\le1$ and
$\eta=1$ on $[-1,1]^{n-2}$, and let
$\eta_H:=\eta(\frac {\cdot}H).$
Define the test function
\begin{align*}
\phi_\alpha(t):=
\begin{cases}
0,&0<t\le1,\\[1mm]
\dfrac{\log t}{L},&1<t<e^L,\\[3mm]
\dfrac{2L-\log t}{L},&e^L\le t<e^{2L},\\[3mm]
0,&t\ge e^{2L}.
\end{cases}
\end{align*}
Let $$v_\alpha(z,y):=\eta_H(z)\phi_\alpha(|y|).$$
Since $\eta_H$ is smooth and $y\mapsto\phi_\alpha(|y|)$ is Lipschitz
on $\{y\in\mathbb R^2:1<|y|<e^{2L}\}$, it follows that
$v_\alpha\in H^1(\Omega_\alpha)$. Moreover, using the fact
$\Omega_\alpha$ is a Lipschitz domain and $v_\alpha$ vanishes on the bound of $\Omega_\alpha$
and the trace characterization of $H_0^1$ (see
\cite[Theorem 4.10]{NecasDirectMethods}), we conclude that
$v_\alpha\in H_0^1(\Omega_\alpha)$.
From Fubini's theorem and polar coordinates,
we infer that
\begin{align}\label{1047}
\int_{\Omega_\alpha}|\nabla v_\alpha|^2\,d x
&=
\left(
\int_{\mathbb R^{n-2}}|\nabla\eta_H|^2\,d z
\right)
\left[
\int_{\mathbb R^2}|\phi_\alpha(|y|)|^2\,d y
\right]+
\left(
\int_{\mathbb R^{n-2}}|\eta_H|^2\,d z
\right)
\left[
\int_{\mathbb R^2}|\nabla\phi_\alpha(|y|)|^2\,d y
\right]\nonumber\\
&\lesssim
\left(
H^{n-4}e^{4L}
+\frac{H^{n-2}}{L}
\right)=
H^{n-2}
\left(
e^{-2L}+\frac1L
\right)
\lesssim\frac{H^{n-2}}{L}.
\end{align}
For any $(z,y)\in K_\alpha$, by their definitions, we have $\eta_H(z)=1$ and
$\phi_\alpha(|y|)\ge\frac34$. Using \eqref{eq:dalpha-upper-sharp},
we find that
\begin{align*}
\int_{\Omega_\alpha}\frac{|v_\alpha(x)|^2}{d_\alpha(x)^2}\,d x
\ge\int_{K_\alpha}\frac{|v_\alpha(x)|^2}{d_\alpha(x)^2}\,d x
\gtrsim H^{n-2}
\int_{e^{3L/4}}^{e^L}\frac{r}{(A_nr)^2}\,d r
\gtrsim H^{n-2}L,
\end{align*}
which, together with \eqref{1047}, further implies that
\begin{align}
\label{eq:sharpness-H01-ratio}
\int_{\Omega_\alpha}\frac{|v_\alpha(x)|^2}
{d_\alpha(x)^{2}}\,d x
\left[\int_{\Omega_\alpha}|\nabla v_\alpha(x)|^2\,d x\right]^{-1}
\gtrsim L^2
=\frac{\gamma_n^2}{\alpha^{2}}.
\end{align}
It remains only to replace $v_\alpha$ by a $C_{\rm{c}}^\infty(\Omega_\alpha)$ function.
Let $\{v_{\alpha,j}\}_{j\in\mathbb N}\subset
C_{\rm{c}}^\infty(\Omega_\alpha)$ converge to
$v_\alpha$ in $H_0^1(\Omega_\alpha)$.
Since $K_\alpha\Subset\Omega_\alpha$, it follows that
$\delta_\alpha:=\operatorname{dist}(K_\alpha,F_\alpha)>0.$
Moreover, for any $x\in K_\alpha$,
$d_\alpha(x)^{-2}\le \operatorname{dist}(x,F_\alpha)^{-2}\le\delta_\alpha^{-2}<\infty$.
Therefore,
\begin{align*}
\int_{K_\alpha}
\frac{|v_{\alpha,j}(x)|^2}{d_\alpha(x)^2}\,d x
\to
\int_{K_\alpha}
\frac{|v_\alpha(x)|^2}{d_\alpha(x)^2}\,d x
\end{align*}
and
\begin{align*}
\int_{\Omega_\alpha}|\nabla v_{\alpha,j}(x)|^2\,d x
\to
\int_{\Omega_\alpha}|\nabla v_\alpha(x)|^2\,d x
\end{align*}
as $j\to\infty$.
This shows that there exists $j\in\mathbb N$ such that
\eqref{eq:sharpness-H01-ratio} holds with $v_\alpha$ replaced by
$v_{\alpha,j}$.
Taking $u_\alpha:=v_{\alpha,j}$, we obtain the desired estimate for
$\alpha\in(0,\frac{\gamma_n}{4}]$.

It remains to consider
$\alpha\in(\frac{\gamma_n}{4},1]$. In this case,
let $\Omega_\alpha:=\Omega_0:=B(\mathbf0,1)$, and $F_0:=\Omega_0^\complement$, and fix a
nonzero function $u_0\in C_{\rm c}^\infty(\Omega_0)$.
For any $x\in\Omega_0$ and $r\in(1+|x|,\infty)$, choose
$s\in(1+|x|,r)$. Then
$\partial B(x,s)\subset\overline{F_0\cap B(x,r)}$, which, combined with
Lemma \ref{1709} and the
standard fact that a ball's and its sphere's Newtonian capacity
are the same, further implies that
\begin{align*}
\operatorname{cap}\left(\overline{F_0\cap B(x,r)}\right)
\ge \operatorname{cap}(\partial B(x,s))=\operatorname{cap}(B(x,s))
=\kappa_ns^{n-2}.
\end{align*}
Letting $s\to r$ and using Lemma \ref{1709} again, we conclude that
\begin{align*}
\operatorname{cap}\left(\overline{F_0\cap B(x,r)}\right)
\ge\kappa_nr^{n-2}
=\operatorname{cap}(B(\mathbf0,r)).
\end{align*}
Thus, for every $\alpha\in(0,1]$,
$d_{\alpha,\Omega_0}(x)\le1+|x|<2$.
Let
\begin{align*}
c_{0,n}:=
\int_{\Omega_0}|u_0(x)|^2\,d x
\left[4\displaystyle\int_{\Omega_0}|\nabla u_0(x)|^2\,d x\right]^{-1}>0.
\end{align*}
Then
\begin{align*}
\int_{\Omega_0}
\frac{|u_0(x)|^2}{d_{\alpha,\Omega_0}(x)^2}\,d x
\ge c_{0,n}
\int_{\Omega_0}|\nabla u_0(x)|^2\,d x.
\end{align*}
Since $\alpha^{-2}\le16\gamma_n^{-2}$, it follows that
\begin{align*}
\int_{\Omega_0}
\frac{|u_0(x)|^2}{d_{\alpha,\Omega_0}(x)^2}\,d x
\ge
\frac{c_{0,n}\gamma_n^2}{16}\alpha^{-2}
\int_{\Omega_0}|\nabla u_0(x)|^2\,d x.
\end{align*}
By homogeneity, normalizing the above two test functions gives the desired estimate.
This completes the proof of Theorem~\ref{thm:main}\textup{(ii)}.
\end{proof}

\begin{remark}
In \cite{MazyaShubinDrum}, Maz'ya and Shubin introduced the
interior capacitary radius, for any
$\alpha\in(0,1)$,
\begin{align*}
r_{\Omega,\alpha}
:=
\sup\left\{
r>0:
\text{there exists a ball }B(x,r)\text{ such that}
\operatorname{cap}(\overline{B(x,r)}\setminus\Omega)
\le
\alpha\,\operatorname{cap}(\overline{B(\mathbf 0,r)})
\right\}.
\end{align*}
\cite[Theorem~1.1 and (3.19)]{MazyaShubinDrum} shows that,
for any $u\in C_{\rm{c}}^\infty(\Omega)$,
\begin{align*}
\int_\Omega \frac{|u|^2}{r_{\Omega,\alpha}^{2}}\,d x
\lesssim
\alpha^{-1}
\int_\Omega|\nabla u|^2\,d x,
\end{align*}
where the implicit positive constant depends only on the dimension $n$.
The essential difference between this result and Theorem~\ref{thm:main}(i)
is that $r_{\Omega,\alpha}$ is a single global scale
associated with the whole
set $\Omega$ and can be viewed as a global counterpart of
taking the supremum of the pointwise capacitary distance over
$\Omega$. By contrast, $d_\alpha(x)$ keeps track of the capacitary scale
pointwise, and hence the inequality in this article is more
sensitive to the geometry of $\Omega$.
Theorem~\ref{thm:main}(ii) shows that
this pointwise refinement leads to one additional $\alpha^{-1}$.
\end{remark}

\smallskip
\noindent\textbf{Acknowledgements}\quad
The authors acknowledge the use of AI tools during the exploratory stage of this project.
All mathematical arguments and proofs in the final manuscript were checked
and written by the authors.

\bigskip

\noindent Yiqun Chen, Dachun Yang, Wen Yuan, and Yangyang Zhang
%(Corresponding author)

\smallskip

\noindent  Laboratory of Mathematics and Complex Systems
(Ministry of Education of China),
School of Mathematical Sciences, Institute for Advanced Study,
Beijing Normal University,
Beijing 100875, The People's Republic of China

\smallskip

\noindent {\it E-mails}: \texttt{yiqunchen@mail.bnu.edu.cn} (Y. Chen)

\noindent\phantom{{\it E-mails:}} \texttt{dcyang@bnu.edu.cn} (D. Yang)

\noindent\phantom{{\it E-mails:}} \texttt{wenyuan@bnu.edu.cn} (W. Yuan)

\noindent\phantom{{\it E-mails:}} \texttt{yangyzhang@bnu.edu.cn} (Y. Zhang)

\bigskip

\noindent Jie Xiao

\smallskip

\noindent Department of Mathematics and Statistics, Memorial University, St. John's, NL A1C5S7, Canada

\smallskip

\noindent {\it E-mail}: \texttt{jxiao@mun.ca}


\begin{thebibliography}{99}

\bibitem{AghajaniKinnunenRadulescu2025}
A. Aghajani, J. Kinnunen and V. D. R{\u{a}}dulescu,
Hardy-type inequalities for the drifting $p$-Laplace operator
and applications,
Proc. R. Soc. Edinb. Sect. A Math. (2025), https://doi.org/10.1017/prm.2025.14.

\vspace{-0.3cm}

\bibitem{AnconaHardy}
A. Ancona,
On strong barriers and an inequality of Hardy for domains in $\mathbb R^n$,
J. London Math. Soc. (2) 34 (1986), 274--290.

\vspace{-0.3cm}

\bibitem{ArendtGoldsteinGoldstein2006}
W. Arendt, G. R. Goldstein and J. A. Goldstein,
Outgrowths of Hardy's inequality,
in: Recent Advances in Differential Equations and Mathematical Physics, pp. 51--68,
Contemp. Math. 412,
American Mathematical Society, Providence, RI, 2006.

\vspace{-0.3cm}

\bibitem{BarasGoldstein1984}
P. Baras and J. A. Goldstein,
The heat equation with a singular potential,
Trans. Amer. Math. Soc. 284 (1984), 121--139.

\vspace{-0.3cm}

\bibitem{BetsakosBoudabraMarkowsky}
D. Betsakos, M. Boudabra and G. Markowsky,
On the duration of stays of Brownian motion in domains in Euclidean space,
Electron. Commun. Probab. 27 (2022), Paper No. 58, 12 pp.

\vspace{-0.3cm}

\bibitem{Chua2005}
S.-K. Chua,
Sharp conditions for weighted Sobolev interpolation inequalities,
Forum Math. 17 (2005), 461--478.

\vspace{-0.3cm}

\bibitem{ChungZhao}
K. L. Chung and Z. X. Zhao,
From Brownian Motion to Schr\"odinger's Equation,
Grund-lehren der mathematischen Wissenschaften, vol. 312,
Springer-Verlag, Berlin, 1995.

\vspace{-0.3cm}

\bibitem{CohnMeasureTheory}
D. L. Cohn,
Measure Theory,
second edition, Birkh\"auser Advanced Texts,
Birkh\"auser/ Springer, New York, 2013.

\vspace{-0.3cm}

\bibitem{DominguezTikhonov2019}
O. Dom\'inguez and S. Tikhonov,
Sobolev embeddings, extrapolations, and related inequalities,
Memoirs of the European Mathematical Society (to appear) or
arXiv:1909.12818.

\vspace{-0.3cm}

\bibitem{EvansPDE}
L.C. Evans,
Partial Differential Equations,
second edition, Graduate Studies in Mathematics,
vol. 19, American Mathematical Society, Providence, RI, 2010.

\vspace{-0.3cm}

\bibitem{EvansGariepy}
L. C. Evans and R. F. Gariepy,
Measure Theory and Fine Properties of Functions,
revised edition, Textbooks in Mathematics,
CRC Press, Boca Raton, FL, 2015.

\vspace{-0.3cm}

\bibitem{Fitzsimmons2000}
P. J. Fitzsimmons,
Hardy's inequality for Dirichlet forms,
J. Math. Anal. Appl. 250 (2000), 548--560.

\vspace{-0.3cm}

\bibitem{FleschlerTolsaVilla}
I. Fleschler, X. Tolsa and M. Villa,
Carleson's $\varepsilon^2$ conjecture in higher dimensions,
Invent. Math. 241 (2025), 207--307.

\vspace{-0.3cm}

\bibitem{FukushimaOshimaTakeda}
M. Fukushima, Y. \=Oshima and M. Takeda,
Dirichlet Forms and Symmetric Markov Processes,
De Gruyter Studies in Mathematics 19,
Walter de Gruyter, Berlin, 1994.

\vspace{-0.3cm}

\bibitem{GunawanHakimNakaiSawano2018}
H. Gunawan, D. I. Hakim, E. Nakai and Y. Sawano,
The Hardy and Heisenberg inequalities in Morrey spaces,
Bull. Aust. Math. Soc. 97 (2018), 480--491.

\vspace{-0.3cm}

\bibitem{Hardy1920}
G. H. Hardy,
Note on a theorem of Hilbert,
Math. Z. 6 (1920), 314--317.

\vspace{-0.3cm}

\bibitem{Hardy1925}
G. H. Hardy,
An inequality between integrals,
Messenger Math. 54 (1925), 150--156.

\vspace{-0.3cm}

\bibitem{KalamajskaPietruskaPaluba2011}
A. Ka{\l}amajska and K. Pietruska-Pa{\l}uba,
On a variant of the Gagliardo--Nirenberg inequality deduced
from the Hardy inequality,
Bull. Pol. Acad. Sci. Math. 59 (2011), 133--149.

\vspace{-0.3cm}

\bibitem{KaratzasShreve}
I. Karatzas and S. E. Shreve,
Brownian Motion and Stochastic Calculus,
second edition, Graduate Texts in Mathematics 113,
Springer, New York, 1991.

\vspace{-0.3cm}

\bibitem{KinnunenMartio1997}
J. Kinnunen and O. Martio,
Hardy's inequalities for Sobolev functions,
Math. Res. Lett. 4 (1997), 489--500.

\vspace{-0.3cm}

\bibitem{KinnunenKorteHardy}
J. Kinnunen and R. Korte,
Characterizations for the Hardy inequality,
in: Around the Research of Vladimir Maz'ya. I, pp.
239--254,
Int. Math. Ser. (N. Y.), 11,
Springer, New York, 2010.

\vspace{-0.3cm}

\bibitem{LewisUniformlyFat}
J. L. Lewis,
Uniformly fat sets,
Trans. Amer. Math. Soc. 308 (1988), 177--196.

\vspace{-0.3cm}

\bibitem{LuSteinerbergerDV}
J. Lu and S. Steinerberger,
A variation on the Donsker--Varadhan inequality for the principal eigenvalue,
Proc. A 473 (2017), 20160877, 6 pp.

\vspace{-0.3cm}

\bibitem{MazyaIsocapacitary}
V. Maz'ya,
Lectures on isoperimetric and isocapacitary inequalities
in the theory of Sobolev spaces,
in: Heat Kernels and Analysis on Manifolds, Graphs, and Metric Spaces,
307--340,
Contemp. Math., 338, Amer. Math. Soc., Providence, RI, 2003.

\vspace{-0.3cm}

\bibitem{MazyaProblems}
V. Maz'ya,
Seventy five (thousand) unsolved problems in analysis and partial differential equations,
Integral Equations Operator Theory 90 (2018), Paper No. 25, 44 pp.

\vspace{-0.3cm}

\bibitem{MazyaSobolev}
V. Maz'ya,
Sobolev Spaces with Applications to Elliptic Partial Differential
Equations,
second, revised and augmented edition,
Grundlehren der mathematischen Wissenschaften, 342,
Springer, Heidelberg, 2011.

\vspace{-0.3cm}

\bibitem{MazyaShubinDrum}
V. Maz'ya and M. Shubin,
Can one see the fundamental frequency of a drum?
Lett. Math. Phys. 74 (2005), 135--151.

\vspace{-0.3cm}

\bibitem{Necas1962}
J. Ne\v{c}as,
Sur une m\'ethode pour r\'esoudre les \'equations aux
d\'eriv\'ees partielles du type elliptique, voisine de la variationnelle,
Ann. Scuola Norm. Sup. Pisa (3) 16 (1962), 305--326.

\vspace{-0.3cm}

\bibitem{NecasDirectMethods}
J. Ne\v{c}as,
Direct Methods in the Theory of Elliptic Equations,
Springer Monographs in Mathematics,
Springer, Heidelberg, 2012.

\vspace{-0.3cm}

\bibitem{PortStone}
S. C. Port and C. J. Stone,
Brownian Motion and Classical Potential Theory,
Probability and Mathematical Statistics,
Academic Press, New York-London, 1978.

\vspace{-0.3cm}

\bibitem{ReedSimonI}
M. Reed and B. Simon, Methods of Modern Mathematical Physics.
I. Functional Analysis, second edition, Academic Press, New York, 1980.

\vspace{-0.3cm}

\bibitem{VazquezZuazua2000}
J. L. Vazquez and E. Zuazua,
The Hardy inequality and the asymptotic behaviour of the heat
equation with an inverse-square potential,
J. Funct. Anal. 173 (2000), 103--153.

\vspace{-0.3cm}

\bibitem{Wannebo1990}
A. Wannebo,
Hardy inequalities,
Proc. Amer. Math. Soc. 109 (1990), 85--95.

\vspace{-0.3cm}

\bibitem{ZiemerSobolev}
W. P. Ziemer,
Weakly Differentiable Functions: Sobolev Spaces and Functions of
Bounded Variation,
Graduate Texts in Mathematics 120,
Springer, New York, 1989.

\end{thebibliography}
\end{document}